\def\fddto{\xrightarrow{\textit{f.d.d.}}}
\newcommand{\ind}{{\bf 1}}
\def\inddd#1{{\ind}_{\left\{#1\right\}}}
\newcommand{\proba}{\mathbb P}
\newcommand{\esp}{{\mathbb E}}
\newcommand{\inv}{^{-1}}
\newcommand{\cov}{{\rm{Cov}}}
\newcommand{\eqnh}{\begin{eqnarray*}}
\newcommand{\eqne}{\end{eqnarray*}}
\newcommand{\eqnhn}{\begin{eqnarray}}
\newcommand{\eqnen}{\end{eqnarray}}
\newcommand{\equh}{\begin{equation}}
\newcommand{\eque}{\end{equation}}
\def\summ#1#2#3{\sum_{#1 = #2}^{#3}}
\def\prodd#1#2#3{\prod_{#1 = #2}^{#3}}
\def\topp#1{^{(#1)}}
\def\nn#1{{\left\|#1\right\|}}
\def\abs#1{\left|#1\right|}
\def\ccbb#1{\left\{#1\right\}}
\def\sccbb#1{\{#1\}}
\def\pp#1{\left(#1\right)}
\def\spp#1{(#1)}
\def\mmid{\;\middle\vert\;}
\def\floor#1{\left\lfloor #1 \right\rfloor}
\def\vv#1{{\boldsymbol #1}}
\def\qmand{\quad\mbox{ and }\quad}
\def\qmwith{\quad\mbox{ with }\quad}
\def\mfa{\mbox{ for all }}
\def\wt#1{\widetilde{#1}}
\def\wb#1{\overline{#1}}
\def\what#1{\widehat{#1}}
\def\limn{\lim_{n\to\infty}}
\def\limsupn{\limsup_{n\to\infty}}
\def\weakto{\Rightarrow}
\newcommand\aswto{\stackrel{a.s.w.}\to}
\def\Z{{\mathbb Z}}
\def\R{{\mathbb R}}
\def\N{{\mathbb N}}
\def\BB{{\mathbb B}}
\renewcommand{\d}{{\rm d}}
\newtheorem{theorem}{Theorem}[section]
\newtheorem{lemma}[theorem]{Lemma}
\newtheorem{proposition}[theorem]{Proposition}
\theoremstyle{definition}
\theoremstyle{remark}
\newtheorem{remark}[theorem]{Remark}
\theoremstyle{remark}
\newcommand{\calB}{{\mathcal B}}
\newcommand{\calG}{{\mathcal G}}
\newcommand{\calM}{\mathcal{M}}
\def\<{\langle}
\def\>{\rangle}
\newcommand{\rmTL}{{\rm TL}}
\newcommand{\rmRW}{{\rm RW}}
\newcommand{\rmBerW}{{\rm BerW}}
\numberwithin{equation}{section}
\newcounter{oldeq}
\newcounter{usesofarxiv}
 \newcommand{\arxiv}[1]{
\setcounter{oldeq}{\value{equation}}
 \addtocounter{usesofarxiv}{1}
 \setcounter{equation}{0}
\def\theoldeq{\theequation}
\def\theequation{x-\arabic{usesofarxiv}.\arabic{equation}}
\def\theequation{\arabic{section}.\arabic{usesofarxiv}.\arabic{equation}}
\def\theequation{\thesection.\arabic{usesofarxiv}.\arabic{equation}}
% \begin{ana}
  %\footnotesize
  \colorlet{shadecolor}{gray!10}
{\footnotesize
\begin{shaded}#1
\end{shaded}
   \setcounter{equation}{\value{oldeq}}
\numberwithin{equation}{section}
}}
\renewcommand{\arxiv}[1]{}
\author{W\l odzimierz Bryc}
\address
{
W\l odzimierz Bryc\\
Department of Mathematical Sciences\\
University of Cincinnati\\
2815 Commons Way\\
Cincinnati, OH, 45221-0025, USA.
}
\email{wlodzimierz.bryc@uc.edu}
\author{Joseph Najnudel}
\address
{
Joseph Najnudel\\
School of Mathematics\\
University of Bristol\\
Bristol, United Kingdom.
}
\email{joseph.najnudel@bristol.ac.uk}
\author{Yizao Wang}
\address
{
Yizao Wang\\
Department of Mathematical Sciences\\
University of Cincinnati\\
2815 Commons Way\\
Cincinnati, OH, 45221-0025, USA.
}
\email{yizao.wang@uc.edu}
\keywords{Asymmetric simple exclusion processes with open boundaries, scaling limit, random walk conditioned to stay positive, %YZ added
Denisov's representation}
\subjclass[2010]
{60F05; %central limit and other weak theorems
60K35} %Interacting random processes; statistical mechanics type models; percolation theory
\begin{document}\sloppy
\title[Limit fluctuations of open TASEP on the coexistence line]{Limit fluctuations of stationary measure of totally asymmetric simple exclusion process with open boundaries on the coexistence line}

\begin{abstract}%
We describe limit fluctuations of the height function for the open TASEP on the coexistence line under the stationary measure. It is known that the height function satisfies a law of large numbers as the number of sites $n$ goes to infinity which at the coexistence line is exotic in the sense that the first-order limit is random. Here, we study  the functional central limit theorem:  we show that with a random centering and normalized by $\sqrt n$, the second-order limit of the height functions is a (random) mixture of two independent Brownian motions.
\end{abstract}

\date{\today \jobname.tex}

\maketitle

\arxiv{This is an expanded version of the manuscript.}
\section{Introduction} %
The totally asymmetric simple exclusion process (TASEP) with open boundaries, commonly referred to as open TASEP, is a fundamental stochastic model in non-equilibrium statistical physics. Its behavior, %
particularly the steady-state density profile,
is strongly shaped by the boundary conditions, making it a rich and compelling system for analysis. The model is defined as a continuous-time %
 irreducible Markov process on the finite state space $\{0,1\}^n$, where each configuration encodes the occupancy of $n$ lattice sites, with $1$ denoting an occupied site and $0$ an empty one.

Informally, open TASEP describes particles moving from left to right along a one-dimensional lattice of sites $\{1, \dots, n\}$, with open boundaries that permit %
particles to enter and exit.
Each particle attempts to jump to its immediate right neighbor at rate 1, provided that the site lies within $\{1, \dots, n\}$ and is unoccupied. Additionally, particles may enter the system at the leftmost site (site 1) at rate $\alpha > 0$ if it is empty, and may exit from the rightmost site (site $n$) at rate $\beta > 0$ if it is occupied. See Figure~\ref{fig:openASEP} for an illustration.

\begin{figure}[ht]
\centering
\begin{tikzpicture}[scale=0.7]
\draw[thick] (0.9, 0) -- (10.1, 0);
\foreach \x in {1, ..., 10} {
	\draw[gray] (\x, 0.085) -- (\x, -0.085);
}
\draw (10,-.1) node[below]{$n$};
\draw (1,-.1)  node[below]{$1$};
\fill[thick] (3, 0) circle(0.2);
\fill[thick] (6, 0) circle(0.2);
\fill[thick] (7, 0) circle(0.2);
\fill[thick] (10, 0) circle(0.2);
\draw[thick, ->] (3, 0.3)  to[bend left] node[midway, above]{$1$} (4, 0.3);
\draw[thick, ->] (7, 0.3) to[bend left] node[midway, above]{$1$} (8, 0.3);
\draw[thick, ->] (10, 0.3) to[bend left] node[midway, above]{$\beta$} (11, 0.3);
\draw[thick, ->] (-0.08, 0.5) to[bend left] node[midway, above]{$\alpha$} (0.9, 0.4);

\end{tikzpicture}
\caption{Jump rates in the open TASEP with 4 sites occupied.
}
\label{fig:openASEP}
\end{figure}
The unique stationary measure of the TASEP,
often referred to as the {\em steady state} in the physics literature,
has been extensively studied. It is well known that the large-scale behavior of this measure depends on the boundary parameters $\alpha$ and $\beta$, as captured by the celebrated phase diagram first described in %
\cite{derrida92exact,derrida93exact}.
In particular, along the so-called {\em coexistence line}, where $\alpha = \beta < 1/2$, the low entry and exit rates balance to produce a high-density traffic jam that forms at a random location. In the macroscopic limit, this jam location is uniformly distributed across the system; see \cite[page 293]{schutz93phase} for physics background and
 \cite{wang24askey} for mathematical statement  \eqref{eq:LLN}.

Our goal is to understand the fluctuations around this random limiting behavior. More precisely, let $(\tau_{n,1},\dots,\tau_{n,n})$ denote a configuration drawn from the stationary measure of the open TASEP with $n$ sites. We study the scaling limit of the associated height function defined by the %
partial-sum
 process
\[
h_n(t):=\summ k1{\floor {nt}}\tau_{n,k}, n\in\N, \; t\in[0,1],
\]
as $n \to \infty$.
Here,  anticipating a limit process with continuous Brownian  fluctuations, we use $t\in [0,1]$ as
a spatial variable indexing sites; some authors prefer to use $x$ instead to emphasize this interpretation.

A convenient reparameterization %
commonly used in the literature
is to restrict attention to $\alpha, \beta < 1$ and define \equh\label{eq:defining ABCD} a = \frac{1 - \alpha}{\alpha} > 0, \quad \text{and} \quad b = \frac{1 - \beta}{\beta} > 0. \eque %
A phase transition in terms of the limit fluctuations of $h_n(t)$ appropriately normalized has been known. Namely, the entire range of parameters is divided into:
\begin{enumerate}[(a)]
\item maximal current phase: $a,b<1$, %
\item high density phase: %
$b>a, b>1$,
\item low density phase: %
$a>b, a>1$.
\end{enumerate}
The limit fluctuations for all the three phases and also the boundary of maximal current phase
($a=1,b\le 1$ and $a\le 1,b=1$)
 have been characterized in \citep{derrida04asymmetric,bryc19limit,wang24askey}.
 In fact, all these results have been established for the general model of asymmetric simple exclusion process with open boundaries, commonly referred to as open ASEP, which contains open TASEP as a special case. For overview of ASEP, see for example~\citep{derrida07nonequilibrium,corwin22some,blythe07nonequilibrium}. In this paper we focus on open TASEP.

 A typical limit theorem takes the form as follows
\[
\ccbb{\frac1{\sqrt n}(h_n(t) - \floor{nt}\mu ) }_{t\in[0,1]}\fddto \ccbb{Z_t}_{t\in[0,1]},
\]
 where both the centering term $\mu$ and the limiting process $Z$ depend on the position of  parameters $a,b$  within the phase diagram.
The centering $\mu$ is known to be $1/2$ in the maximal current phase and its boundary, %
$b/(1+b)$
 in the high density phase, and
 $1/(1+a)$ in the low density phase.
 See Figure \ref{fig:phase} for an illustration of the phase diagram and the corresponding limit fluctuations $Z$.
  \begin{remark}Our notations are different from those in \citep{bryc19limit,wang24askey}, where $(A,C) = (b,a)$ was used. The choice of $A,C$ therein was more convenient when working with Askey--Wilson orthogonal polynomials associated to open ASEP \citep{bryc17asymmetric}. Our approach does not rely on this connection. The choice $(a,b)$ is more natural in general in the sense that the alphabetic order corresponds to the left-to-right order ($a$ and $b$ describe the behaviors of the left and right boundaries, respectively). \end{remark}

\begin{figure}[ht!]
\begin{tikzpicture}[scale=0.96]
 \draw[->] (5,5) to (5,10.2);
 \draw[->] (5.,5) to (11.2,5);
   \draw[-, dashed] (5,8) to (8,8);
   \draw[-, dashed] (8,8) to (8,5);
   \draw[-] (8,8) to (10,10);
   \node [left] at (5,8) {\scriptsize$1$};
   \node[below] at (8,5) {\scriptsize $1$};
     \node [below] at (11,5) %
     {$b$};
   \node [left] at (5,10) %
   {$a$};
    \node [above] at (6.5,9) {LD};
    \node [below] at (10,6.5) {HD};
     \node [below] at (6.5,6.5) {MC};
     \node at (10,10) {\small coexistence line };
 \draw[->] (4.5,8) to [out=-45,in=-135] (6,8);
  \node [above] at (3.8,7.9) {\small $\frac{1}{2\sqrt{2}}(\mathbb B+\widehat{\mathbb B}^{me})$};
  \node[left] at (8.2,4.5) {\small $\frac{1}{2\sqrt{2}}(\mathbb B+{\mathbb B}^{me})$};
 \draw[->] (8,4.5) to [out =45,in= -45] (8,6) ;
  \draw[-] (8,4.9) to (8,5.1);
   \draw[-] (4.9,8) to (5.1,8);
 \node [below] at (5,5) {\scriptsize$(0,0)$};
     \node [above] at (4.1,8.8){ \small %
     $\frac{\sqrt a}{1+a}\mathbb B$
     };
     \draw[->] (4.5,9) to [out=-45,in=-135] (6,9);
     \draw[->] (10,4.5) to [out =45,in= -45] (10,6) ;%
     \node [below] at (10,4.6) { \small %
     $\frac{\sqrt b}{1+b}\mathbb B$};
     \draw[->] (4.5,6.1) to [out=-45,in=-135] (6,6.18);
  \node [above] at (3.8,6) {\small $\frac{1}{2\sqrt{2}}(\mathbb B+ {\mathbb B}^{ex})$};
  \draw[->] (10,8.5) to [out=-90,in=-35] (8,8);
  \node at (10,8.7) {\small $\frac{1}{2}\mathbb B$};
\end{tikzpicture}
    \caption{
Phase diagram for the
limit
fluctuations of the hight function of open TASEP under stationary measure. 
LD, HD, and MC respectively stand for the low density, high density and maximal current phases.
Processes $\mathbb{B}$, ${\mathbb B}^{ex}$, ${\mathbb B}^{me}$ and $\widehat{\mathbb B}^{me}$ respectively stand for the Brownian motion, excursion, meander, and reversed meander; see, for example, \cite{bryc19limit} for their definitions. Processes %
in the sums are assumed to be independent. }
    \label{fig:phase}
\end{figure}
The only unsolved limit theorem for the fluctuations is on the {\em coexistence line}, which is the boundary between high and low density phases, corresponding to $a = b>1$. The situation here is delicate. In \citep{wang24askey}, it was shown that %
on the coexistence line, the first-order limit theorem has a random limit. Namely, it was shown there the convergence of finite-dimensional distributions of
\equh\label{eq:LLN}
\ccbb{\frac1{n}h_n(t) }_{t\in[0,1]}%
\fddto
 \ccbb{(t\wedge U)\frac1{1+a} + (t-U)_+\frac a{1+a}}_{t\in[0,1]}
\eque
as $n\to\infty$ where here and throughout we let $U$ denote a uniform random variable over $(0,1)$.
This result was established in \citep{wang24askey} for more general asymmetric simple exclusion process (ASEP),   where particles can also move to the left with rate $q<1$,  can leave the leftmost site 1 at  rate $\gamma\geq 0$ and can arrive at the rightmost site $n$ at rate $\delta\geq 0$.  %

Our contribution is to elaborate the convergence in \eqref{eq:LLN} by proving the second-order fluctuation limit, {\em in the special case of open TASEP}. Since the first-order limit is random, the centering for the second-order limit theorem has to be random, %
too. For this purpose, we write $s_{n,j} :=\tau_{n,1}+\cdots+\tau_{n,j}$ (with $s_{n,0}:=0$)  and introduce
\[
\tau^*_n:=\min\ccbb{j=0,\dots,n: s_{n,j} - \frac j2 = \min_{k=0,\dots,n} \pp{s_{n,k}-\frac k2}  }.
\]
Our main result is the following. %
Throughout, we let `$\weakto$' denote convergence in distribution.
\begin{theorem}\label{thm:1}
When $a=b>1$, we have
\[
\frac{\tau_n^*}n \weakto U,
\]
and
\begin{multline*}
\ccbb{\frac1{\sqrt n}\pp{h_n(t) - (\floor{nt}\wedge \tau^*_n)\frac1{1+a}  - (\floor{nt}-\tau^*_n)_+\frac a{1+a}}}_{t\in[0,1]}\\
\weakto \ccbb{\sigma_a\pp{\BB_{t\wedge U}+\BB'_{(t-U)_+}}}_{t\in[0,1]}
\end{multline*}
in $D[0,1]$ as $n\to\infty$, where $\BB$ and $\BB'$ are two standard Brownian motions independent from $U$, and
\[
\sigma_a = \frac{\sqrt {a}}{1+a}.
\]
\end{theorem}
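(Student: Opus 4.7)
The plan is to exploit an exact representation of the open-TASEP stationary measure as a functional of a random walk subject to a boundary constraint, apply Denisov's decomposition at the minimum to split the problem into two conditionally independent pieces, and then invoke Donsker's invariance principle on each piece. Concretely, on the coexistence line ($a=b>1$, i.e.\ $\alpha=\beta=1/(1+a)<1/2$), I expect the matrix-product formula for the stationary distribution to be recast so that the cumulative occupation $s_{n,j}=\tau_{n,1}+\dots+\tau_{n,j}$ is realized as (or coupled to) the trajectory of a $\pm 1$-valued random walk bridge conditioned to stay non-negative, with weights dictated by the DEHP matrix ansatz. The quantity $\tau_n^*$ is by definition the location of the minimum of the centered walk $s_{n,j}-j/2$, so the representation is perfectly tailored to the object we want to analyse.

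Once the representation is in place, I would apply Denisov's theorem, according to which a random walk bridge conditioned to stay non-negative decomposes, conditionally on the time of its minimum, into two independent fragments: the time-reversed portion up to the minimum and the forward portion after the minimum, each distributed as an appropriately conditioned one-sided walk. In the present scaling this identifies the law of $(\tau_{n,j})_{j\le\tau_n^*}$ with an (approximately) i.i.d.\ Bernoulli$(1/(1+a))$ sequence and the law of $(\tau_{n,j})_{j>\tau_n^*}$ with an (approximately) i.i.d.\ Bernoulli$(a/(1+a))$ sequence, the two halves being conditionally independent given $\tau_n^*$. Subtracting the corresponding piecewise-linear conditional means recovers exactly the centering $(\floor{nt}\wedge\tau_n^*)/(1+a)+(\floor{nt}-\tau_n^*)_+ a/(1+a)$ appearing in the statement, and Donsker's invariance principle applied to each half then produces two independent Brownian motions with variance $\alpha(1-\alpha)=a/(1+a)^2=\sigma_a^2$, which are precisely $\sigma_a \BB$ and $\sigma_a \BB'$.

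For the convergence $\tau_n^*/n\weakto U$, I would argue that in the diffusive scaling the centered walk has a deterministic V-shape of depth $\Theta(n)$ with minimum at a random location whose first-order law is already prescribed by the LLN \eqref{eq:LLN}; combined with Denisov's decomposition this identifies the limit of $\tau_n^*/n$ as $U\sim \text{Unif}(0,1)$ independent of $(\BB,\BB')$. The joint weak convergence in $D[0,1]$ would then follow by combining the two Donsker limits on the random time intervals $[0,\tau_n^*]$ and $[\tau_n^*,n]$.

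The principal obstacle, I expect, is twofold. First, making the representation of the stationary measure in terms of a conditioned random walk explicit enough for Denisov's theorem to apply \emph{verbatim} --- the coexistence line is the borderline regime where the underlying walk has zero macroscopic drift after accounting for the conditioning, so some care is needed in setting up the correct bridge/boundary condition and in verifying that the scaling matches Denisov's hypotheses. Second, promoting finite-dimensional convergence to convergence in $D[0,1]$: since the concatenation point $\tau_n^*$ is itself random and converges to $U$, tightness of the two rescaled halves must be established jointly with the convergence $\tau_n^*/n\weakto U$, for example by controlling the modulus of continuity of the centered height function uniformly over the shock location.
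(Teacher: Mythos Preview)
Your high-level strategy---represent the stationary law via a random walk, split at the minimum \`a la Denisov, and run Donsker on each half---is exactly the route the paper takes. However, there is one structural point you have not anticipated which is the crux of the actual argument, and without it the plan does not close.

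The height process $s_{n,\cdot}$ itself does \emph{not} admit a clean Denisov decomposition at $\tau_n^*$: conditioning on $\tau_n^*$ does not make the two halves of $(\tau_{n,j})$ independent, even approximately in any usable sense. What the paper does instead (via the two-line representation of \cite{bryc24two}) is couple the height function $\vv S_n'$ to an \emph{auxiliary} lazy walk $\vv S_n$ with steps in $\{-1,0,1\}$ whose law has the exact form
\[
\frac{\d P_{\rmRW,n,a,b}}{\d \nu_1^{\otimes n}}(\vv\omega)\propto \frac{b^{s_n}}{(ab)^{\min_j s_j}}.
\]
It is $\vv S_n$, not $\vv S_n'$, that decomposes exactly at its minimum location $T_n$ into two conditionally independent biased walks (with drift $(a-1)/(a+1)>0$) conditioned to stay nonnegative; when $a>1$ that conditioning is asymptotically trivial, which is what produces the Brownian limits. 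The height increments $\Delta S_{n,j}'$ are then recovered step-by-step from $\Delta S_{n,j}$ through an explicit randomised map, and this coupling is what justifies your heuristic that the occupations are ``approximately Bernoulli$(1/(1+a))$'' before and ``Bernoulli$(a/(1+a))$'' after.

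The catch is that $T_n$ (the minimum of the auxiliary walk) is \emph{not} the same random variable as $\tau_n^*=T_n'$ (the minimum of $s_{n,j}-j/2$), and the centering in the theorem is in terms of $\tau_n^*$. The paper first proves the functional CLT with centering based on $T_n$, and then needs a separate lemma showing that $\{T_n-T_n'\}_{n\in\N}$ is tight (so that swapping $T_n$ for $T_n'$ in the centering costs only $O(1/\sqrt n)$ in sup norm). This step is not difficult once one sees it, but it is not covered by anything in your outline: your plan implicitly assumes the split point with exact conditional independence coincides with $\tau_n^*$, and it does not. Your second anticipated obstacle (tightness with a random concatenation point) is real and is handled in the paper, but the first obstacle is resolved not by tweaking a ``conditioned bridge'' picture---there is no bridge here---but by passing through the auxiliary walk and then controlling $|T_n-T_n'|$.
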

Our proof is based on a representation of the stationary measure of the open TASEP in terms of two interacting random walks, coupled explicitly through a Radon–Nikodym derivative with respect to the product measure. This representation, introduced in \citep{bryc24two},  was inspired by the recent work of \citet{barraquand24stationary}, who developed a two-line representation for a different model in integrable probability. It
differs from previous random walk-based methods for analyzing the stationary measure of the open TASEP 
  \cite{barraquand23stationaryJPA,derrida04asymmetric,himwich24stationary}. Notably, it remains valid along the coexistence line, which constitutes the central focus of this study.

The key idea of our approach is to analyze the difference between the two random walks described in \citep{bryc24two}, conditioning on the location of the minimum of this difference process. To the right of the minimum, the process behaves like a random walk with positive drift conditioned to stay above its minimum. To the left, we obtain the time-reversal of a similar conditioned process. The trajectories of these two walks are indicted with arrows in Figure~\ref{FigH}.

A classical result of %
\citet{iglehart74functional} shows that conditioning a random walk with positive drift to stay positive does not alter its fluctuation behavior (see Lemma~\ref{lem:aux}); hence, both segments converge to independent Brownian motions in the scaling limit. Notably, our proof does not rely on \eqref{eq:LLN}; rather, it yields \eqref{eq:LLN} as a consequence, and in fact establishes it in the stronger mode of convergence in the Skorokhod space $D[0,1]$.

 This approach is distinctly more probabilistic in nature than recent analytical methods used to study fluctuation limits for open ASEP (e.g., \citep{bryc19limit,corwin24stationary,wang24askey}), which rely heavily on the matrix product ansatz introduced by \citet{derrida93exact} and its deep connection to the Askey–Wilson process \citep{bryc17asymmetric,uchiyama04asymmetric}.

At the same time, it is worth pointing out that while the two-line representation we work with here is strictly limited to the open TASEP at this moment, we expect the result to hold for general open ASEP.
Despite recent advances \cite{bryc25stationary}, it is still unclear how we should approach this question.
While the aforementioned approach via Askey--Wilson representation can be applied to establish various limit theorems on fluctuations of stationary measure of general open ASEP \citep{bryc19limit,bryc23asymmetric,corwin24stationary,wang25asymmetric}, on the coexistence line the approach is limited to a first-order limit theorem as in \eqref{eq:LLN}.
The limitation of the Askey--Wilson representation is that the representation does not provide an accessible description of  the law of $\tau_n^*$.

{\em The paper is organized as follows.} %
Section \ref{sec:TL} introduces the two-line representation of the stationary measure for the open TASEP, as recently developed in \citep{bryc24two}. We also present a slightly modified version in Theorem \ref{thm:BZ'}, which is more convenient for our later analysis.
In Section \ref{sec:Denisov}, we offer an alternative representation of the stationary measure for the open TASEP, stated in Theorem \ref{thm:TL denisov}. This formulation builds on the two-line representation and incorporates Denisov’s decomposition of a random walk.
Section \ref{sec:aux} presents an auxiliary result: a functional central limit theorem for a biased random walk.
Finally, in Section \ref{sec:proof}, we restate Theorem \ref{thm:1} as Theorem \ref{thm:1'} using the representation from Theorem \ref{thm:TL denisov}, and provide its proof.

\section{Two-line representation}\label{sec:TL}
Recall  that  in \eqref{eq:defining ABCD} we reparametrized the two boundary parameters  {$\alpha,\beta<1$} of open TASEP  via
\[%
a = \frac{1-\alpha}\alpha>0, \qmand b = \frac{1-\beta}\beta>0.
\]%
In this section, we fix $a,b>0$,  $n\in\N$, and %
we use %
boldface %
$\vv v = (v_1,\dots,v_n)$
to denote %
an $n$-dimensional vector in general. Consider
\[
\Omega_{\rmTL,n} = \{0,1\}^{n}\times \{0,1\}^n,
\]
 and represent each element in $\Omega_{\rmTL,n}$ as $(\vv\omega\topp1,\vv\omega\topp2)$. Set $s_0:=0$ and
 \[
 s_j\topp i \equiv s_j\topp i(\vv\omega\topp i):= \omega_1\topp i+\cdots+\omega_j\topp i,\quad j=1,\dots,n, i=1,2.
 \]
 Let $P_{\rmBerW,n}$ %
 denote
 the law of $n$-step vector of a Bernoulli random walk with parameter $1/2$; that is, the uniform probability measure on $\{0,1\}^n$.
Let $P_{\rmTL,n,a,b}$ be the probability measure on $\Omega_{\rmTL,n}$ determined by
\[%
\frac{\d P_{\rmTL,n,a,b}}{\d P^{\otimes2}_{{\rm BerW},n}}\pp{\vv\omega\topp1,\vv\omega\topp2} \propto \frac{b^{s_n\topp1-s_n\topp 2}}{(ab)^{\min_{j=0,\dots,n}(s_j\topp 1-s_j\topp2 )}}, \quad \pp{\vv\omega\topp1,\vv\omega\topp2}\in\Omega_{\rmTL,n}.
\]%
We are in particular interested in the marginal law of $P_{\rmTL,n,a,b}$ on the first argument. That is, with
\[
\pi(\vv\omega\topp1,\vv\omega\topp2) := \vv\omega\topp1,
\] the law $P_{\rmTL,n,a,b}\circ\pi\inv$. The following result was established in \citep{bryc24two}.
\begin{theorem}\label{thm:BZ}
The law of $P_{\rmTL,n,a,b}\circ\pi\inv$  is the law of $(\tau_{n,1},\dots,\tau_{n,n})$, that is, the stationary measure of the open TASEP of size $n$ with parameters $\alpha = 1/(1+a),\beta = 1/(1+b)$.
\end{theorem}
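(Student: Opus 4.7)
\emph{Proof sketch.} My plan is to identify the marginal $P_{\rmTL,n,a,b}\circ\pi^{-1}$ with the matrix-product formula of Derrida–Evans–Hakim–Pasquier, which is known to encode the stationary measure of open TASEP with parameters $\alpha = 1/(1+a)$, $\beta = 1/(1+b)$. The first step is to sum out $\vv\omega^{(2)}$ explicitly. Introducing the height walk $h_j := s_j\topp1 - s_j\topp2$ with $h_0 := 0$, and its running minimum $M_j := -\min_{0\leq k\leq j} h_k \geq 0$, the two-line weight factors conveniently as
\[
b^{s_n\topp1 - s_n\topp2}(ab)^{-\min_j h_j} \;=\; b^{h_n + M_n}\, a^{M_n}.
\]
For fixed $\vv\omega\topp1$, the increments of $h$ satisfy $h_j - h_{j-1} \in \{0,1\}$ when $\omega_j\topp1 = 1$ and $h_j - h_{j-1} \in \{-1,0\}$ when $\omega_j\topp1 = 0$, so summing over $\vv\omega\topp2$ reduces to summing over constrained lattice paths weighted by $b^{H_n} a^{M_n}$, where $H_j := h_j + M_j \geq 0$ is the walk reflected at its running minimum.

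Next, I would encode this weighted path sum as a matrix element. Introduce operators $E,D$ on the vector space indexed by $\Z_{\geq 0}$, with basis label tracking the reflected height $H_j$, together with boundary row and column vectors $\langle W|$ and $|V\rangle$ whose components are geometric in $a$ and $b$ respectively, chosen so that the two-line weight accumulates correctly as the path is built step by step. A direct calculation should then verify that $(D, E, \langle W|, |V\rangle)$ satisfies the DEHP algebra
\[
DE = D + E,\qquad \alpha\langle W| E = \langle W|,\qquad \beta D|V\rangle = |V\rangle,
\]
with $\alpha = 1/(1+a)$ and $\beta = 1/(1+b)$, and that the marginal weight of $\vv\omega\topp1$ is proportional to $\langle W|X_{\omega_1\topp1}\cdots X_{\omega_n\topp1}|V\rangle$ with $X_0 = E$, $X_1 = D$. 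Appealing to the classical matrix-ansatz theorem, which asserts that this matrix element (normalized) gives the stationary measure of open TASEP, completes the proof.

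The main obstacle is the careful construction of $E, D, \langle W|, |V\rangle$: the components must faithfully track the coupled evolution of $(H_j, M_j)$, the boundary vectors must reproduce the weights $b^{H_n}$ and $a^{M_n}$, and the DEHP relations must hold with the correct parameter identification. An alternative route I would consider in parallel, which avoids invoking the DEHP algebra, is to verify directly that the marginal $P_{\rmTL,n,a,b}\circ\pi\inv$ is invariant under the open TASEP generator; this could plausibly be carried out by exhibiting, for each particle-update move, an explicit measure-preserving involution on $\Omega_{\rmTL,n}$ that exchanges pre- and post-update configurations of $\vv\omega\topp1$ while preserving the two-line weight, but it is less clear that such an involution is easy to write down.
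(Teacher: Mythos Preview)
The paper does not prove Theorem~\ref{thm:BZ}; it merely quotes it as a result established in \citep{bryc24two}. So there is no in-paper proof to compare against.

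That said, your proposed route is sound and can be carried through exactly as you outline. On the space with orthonormal basis $\{|k\rangle\}_{k\ge 0}$ indexed by the reflected height $H_j$, take
\[
(D)_{k,\ell}=\indd{\ell\in\{k,k+1\}},\qquad
(E)_{k,\ell}=\indd{\ell\in\{k-1,k\},\,k\ge1}+(1+a)\,\indd{k=\ell=0},
\]
together with $\langle W|=\langle 0|$ and $|V\rangle=\sum_{k\ge0}b^{k}|k\rangle$. A one-line check gives $\langle W|E=(1+a)\langle W|$ and $D|V\rangle=(1+b)|V\rangle$, matching $\alpha=1/(1+a)$, $\beta=1/(1+b)$; and $(DE)_{k,\ell}=(E)_{k,\ell}+(E)_{k+1,\ell}=(E)_{k,\ell}+(D)_{k,\ell}$ verifies $DE=D+E$. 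Expanding the matrix element as a sum over paths $0=H_0,H_1,\dots,H_n$ reproduces precisely $\sum_{\vv\omega\topp2} b^{H_n}a^{M_n}$: the entry $(E)_{0,0}=1+a$ accounts for the two choices of $\omega_j\topp2$ when $\omega_j\topp1=0$ and $H_{j-1}=0$ (the $h$-step $0$ contributes~$1$, the reflected $h$-step $-1$ contributes~$a$), and all other entries account for a single choice each. Invoking the Derrida--Evans--Hakim--Pasquier theorem then finishes the argument. This matrix-product/lattice-path identification is in fact the method used in the cited reference, so your sketch is essentially the intended proof rather than an alternative one; the involution idea you mention as a backup is not needed.
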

For our purpose, we shall also work with the induced measure
\[
 \wt P_{\rmTL,n,a,b}:=P_{\rmTL,n,a,b}\circ\Phi\inv
\]
by the function $\Phi:\Omega_{\rmTL,n}\to\wt\Omega_{\rmTL,n}:=\{0,1\}^n\times \{-1,0,1\}^n$ given by
\[
 \Phi\pp{\vv\omega\topp1,\vv\omega\topp2} = \pp{\vv\omega\topp1,\vv\omega\topp1-\vv\omega\topp2}.
\]
 (Note that $\Phi$ is one-to-one, but not onto; in particular $\wt P_{\rmTL,n,a,b}$ does not have full support on $\wt\Omega_{\rmTL,n}$.)
 We shall also work with random walks with steps taking values from $\{-1,0,1\}$ and law of steps as $\nu_1 = (1/4)\delta_{-1}+(1/4)\delta_1+(1/2)\delta_0$ (steps $\pm1$ with probability $1/4$ each and $0$ with probability $1/2$).

We shall work with the following variation of Theorem \ref{thm:BZ}.
For
$\vv\omega = (\omega_1,\dots,\omega_n) \in \{-1,0,1\}^n$, write $s_j\equiv s_j(\vv\omega) :=\omega_1+\cdots+\omega_j, j=1,\dots,n$ and $s_0 = 0$.
\begin{theorem}\label{thm:BZ'}
For all $a,b>0$,
\equh\label{eq:Bayesian}
\wt P_{\rmTL,n,a,b}\pp{\vv\omega\topp1,\vv\omega} = P_{\rmRW,n,a,b}(\vv\omega)\prodd j1n q\pp{\omega_j\topp1\mmid \omega_j},
\eque
with $P_{\rmRW,n,a,b}$ determined by
\equh\label{eq:Denisov0}
\frac{\d P_{\rmRW,n,a,b}}{\d \nu_1^{\otimes n}}(\vv\omega) =\frac1{C_{n,a,b}} \frac{b^{s_n}}{(ab)^{\min_{j=0,\dots,n}s_j}}, \quad \vv\omega\in\{-1,0,1\}^n,
\eque
for some normalizing constant $C_{n,a,b}$ (see \eqref{eq:C_n,a,b} below)
and
\[
q(\omega'\mid \omega) := \begin{cases}
1, & \mbox{ if }(\omega',\omega) = (1,1)  \mbox{ or } (0,-1),\\
1/2, & \mbox{ if } (\omega',\omega) = (1,0) \mbox{ or } (0,0),\\
0, & \mbox{ otherwise.}
\end{cases}
\]
\end{theorem}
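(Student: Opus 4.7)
The plan is to start from the defining pushforward $\wt P_{\rmTL,n,a,b}=P_{\rmTL,n,a,b}\circ\Phi^{-1}$ and rewrite the density in the new $(\vv\omega\topp1,\vv\omega)$ coordinates. The argument reduces \eqref{eq:Bayesian} to two ingredients: (i) the Radon--Nikodym factor appearing in the definition of $P_{\rmTL,n,a,b}$ turns into a function of the difference process $\vv\omega$ alone; and (ii) the pushforward of the reference measure $P_{\rmBerW,n}^{\otimes 2}$ under $\Phi$ factorizes stepwise as $\nu_1$ times the kernel $q$.

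For (i), observe that $\Phi$ is a bijection from $\Omega_{\rmTL,n}$ onto its image, with inverse $(\vv\omega\topp1,\vv\omega)\mapsto(\vv\omega\topp1,\vv\omega\topp1-\vv\omega)$, and under this bijection $s_j\topp1-s_j\topp2=s_j(\vv\omega)$, the partial sum of the differences. Consequently the Radon--Nikodym derivative
$b^{s_n\topp1-s_n\topp2}/(ab)^{\min_j(s_j\topp1-s_j\topp2)}$, when read in the new coordinates, depends only on $\vv\omega$ and equals $b^{s_n}/(ab)^{\min_{0\le j\le n}s_j}$.

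For (ii), we would work one coordinate at a time. The measure $P_{\rmBerW,n}^{\otimes 2}$ assigns mass $1/4$ to each $(\omega_j\topp1,\omega_j\topp2)\in\{0,1\}^2$, which pushes forward to mass $1/4$ on each of the four compatible pairs $(\omega_j\topp1,\omega_j)\in\{(1,1),(1,0),(0,0),(0,-1)\}$. A direct four-case tabulation identifies the $\omega_j$-marginal with $\nu_1$ (mass $1/4$ on $\pm 1$ and $1/2$ on $0$) and the conditional law of $\omega_j\topp1$ given $\omega_j$ with $q(\cdot\mmid\omega_j)$. By independence across coordinates under $P_{\rmBerW,n}^{\otimes 2}$, this extends to
\[
P_{\rmBerW,n}^{\otimes 2}\circ\Phi^{-1}(\vv\omega\topp1,\vv\omega)=\nu_1^{\otimes n}(\vv\omega)\prodd j1n q(\omega_j\topp1\mmid\omega_j).
\]

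Combining (i) and (ii) yields
\[
\wt P_{\rmTL,n,a,b}(\vv\omega\topp1,\vv\omega)=\frac{1}{C_{n,a,b}}\cdot\frac{b^{s_n}}{(ab)^{\min_{0\le j\le n}s_j}}\,\nu_1^{\otimes n}(\vv\omega)\prodd j1n q(\omega_j\topp1\mmid\omega_j),
\]
and grouping the $\vv\omega$-only factor as $P_{\rmRW,n,a,b}$ produces \eqref{eq:Bayesian} with the density \eqref{eq:Denisov0}. The same constant $C_{n,a,b}$ normalizes both $P_{\rmTL,n,a,b}$ and $P_{\rmRW,n,a,b}$, because $\sum_{\omega\topp1}q(\omega\topp1\mmid\omega)=1$ coordinatewise, so marginalizing $\vv\omega\topp1$ out of the above identity leaves exactly $P_{\rmRW,n,a,b}$. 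There is no deep obstacle; the only point requiring care is the stepwise tabulation in (ii), together with the observation that $\Phi$ is not onto, so that the kernel $q$ automatically encodes the compatibility constraint $\omega_j\topp1-\omega_j\in\{0,1\}$ and the measure is supported on the image of $\Phi$.
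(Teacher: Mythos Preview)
Your proposal is correct and is precisely the verification that the paper leaves implicit: the paper's own proof is the single sentence ``The result follows from checking the Bayesian formula for conditional probability carefully,'' and your steps (i)--(ii) carry out exactly that check. The only thing worth adding is that your tabulation confirms the paper's definition of $q$, and your remark on the shared normalizing constant $C_{n,a,b}$ is a useful clarification the paper does not make explicit.
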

\begin{proof}
The result follows from checking the Bayesian formula for conditional probability carefully.
\end{proof}
\section{An extension of Denisov's decomposition}\label{sec:Denisov}
We establish a representation  of the height function of open TASEP in terms of random variables  (instead of a formula as a function on the sample space) that is convenient for our analysis later.

 The representation we introduce is notationally lengthy, so we begin with an overview before presenting the precise definitions below. Our construction is inspired by Denisov's decomposition of random walks \citep{denisov83random}, which we refer to as {\em Denisov's representation} for open TASEP. Specifically, we construct a sample $(\vv S'_n,\vv S_n)$ and ultimately demonstrate that the law of their increment processes coincides exactly with
  $\wt P_{\rmTL,n,a,b}$.

In this section, we explicitly indicate the dimensions of possibly random vectors, which are denoted by boldface letters.
When the vector represents a path of length $\ell$, it is denoted by $\vv s_\ell = (0,s_{\ell,1},\dots,s_{\ell,\ell})$ with the initial value 0 included (so the total length of $\vv s_\ell$ is $\ell+1$). When the vector represents the increments of the path, it is denoted by $\Delta\vv s = (s_{\ell,1},s_{\ell,2}-s_{\ell,1},\dots,s_{\ell,\ell}-s_{\ell,\ell-1})$.

In words, to sample a path of the bivariate random walk $(\vv S'_n,\vv S_n)$ we proceed as follows.
\begin{enumerate}[(1)]
\item We start by sampling the first moment when the process $\vv S_n$ reaches the minimum of the first $n$ steps, denoted by $T_n$. Then unless $T_n = 0$, we have that the $T_n$-th step of $\vv S_n$ is necessarily $-1$. (We do not discuss case $T_n =0$ in this overview here, which only requires a slight modification of the description below.)
\item We then sample two conditionally independent random vectors (with length $T_n-1$ and $n-T_n$ respectively) denoted by
$\vv L_{T_n-1}$ and $\vv R_{n-T_n}$,
respectively.
\item %
We then set the path of $\vv S_n$ before the $(T_n-1)$-th step as the path starting from zero with increments corresponding to those of $\vv L_{T_n-1}$ {\em in the time-reversal order}, and set the path of $\vv S_n$ after the $T_n$-th step as the path represented by $\vv R_{n-T_n}$.

\color{black}
\item
 A desired sample path of $\vv S_n'$ is obtained similarly by first sampling ${\vv L}_{T_n-1}'$ and ${\vv R}_{n-T_n}'$, which are two random vectors coupled with $\vv L_{T_n-1}$ and $\vv R_{n-T_n}$ respectively, and then concatenating the two to the $T_n$-th step (which is 0 in place of $-1$ this time).
\end{enumerate}

In words, the two processes $\vv L_{T_n-1}$ and $\vv R_{n-T_n}$ represent the left and right processes, respectively, of $\vv S_n$ starting from the minimum location $T_n$, and the two are conditionally independent given $T_n$. Similarly, $\vv L_{T_n-1}'$ and $\vv R_{n-T_n}'$ are the left and right processes of $\vv S_n'$ respectively starting {\em also} from $T_n$.

Now we provide a complete description. For this purpose we need notations for random walks with step law
parameterized by $a>0$ as
\[
\nu_a(\omega) := \begin{cases}
\displaystyle\frac a{a+a\inv+2}, & \mbox{ if } \omega = 1,\\\\
\displaystyle\frac 2{a+a\inv+2}, & \mbox{ if } \omega = 0,\\\\
\displaystyle\frac {a\inv}{a+a\inv+2}, & \mbox{ if } \omega = -1,
\end{cases}
\quad \omega \in\{-1,0,1\}.
\]
The mean of each step is $\int \omega \nu_a(\d\omega) = (a-a\inv)/(a+a\inv+2) = (a-1)/(a+1)$.
Then, the $\ell$-step vector of such a random walk has law $\nu_a^{\otimes \ell}$ on $\{-1,0,1\}^\ell$.

We first give the formula for the law of $T_n$. We let $p\topp a_\ell$ denote the probability that a random walk with step law $\nu_a$ does not reach $-1$ during the first $\ell$ steps. Then, we consider the law
\equh\label{eq:T_n}
\proba(T_n = m) = \begin{cases}
\displaystyle\frac a{4C_{n,a,b}}w_a^{m-1}w_b^{n-m}p\topp a_{m-1}p\topp b_{n-m}, & \mbox{ if }
m=1,\dots,n,\\\\
\displaystyle\frac1{C_{n,a,b}}w_b^np\topp b_n, &\mbox{ if } m = 0,
\end{cases}
\eque
with
\[
w_a := \frac a4+\frac 1{4a}+\frac12,
\]
and
\equh\label{eq:C_n,a,b}
C_{n,a,b} = \sum_{m=1}^{n}\frac a4 w_a^{m-1}p_{m-1}\topp a w_b^{n-m}p_{n-m}\topp b + w_b^np\topp b_n.
\eque

We next introduce a family of processes
$\vv L_{m}$ and $\vv R_{m}$ indexed by $m\in\N_0$.
For each $m\in\N_0$, we let
\begin{align*}
\vv L_m & :=\ccbb{L_{m,j}}_{j=0,\dots,m},\\
 \vv R_m&:=\ccbb{R_{m,j}}_{j=0,\dots,m},
\end{align*}
be stochastic processes satisfying the following.
\begin{enumerate}[(i)]
\item The process $\vv L_m$ has the law of the $m$-step path of a random walk starting from 0 with step law $\nu_a$, conditioned to remain non-negative up to step $m$.
\item The process $\vv R_m$ has the law of the $m$-step path of a random walk starting from 0 with step law $\nu_b$, conditioned to remain non-negative up to step $m$.
\end{enumerate}
We assume further that
$\{T_n\}_{n\in\N}$,
$\{\vv L_m\}_{m\in\N_0}$ and $\{\vv R_m\}_{m\in\N_0}$ are independent.

Given an $n$-step path $\vv s = (0,s_1,\dots,s_n)$ (starting from 0), set the $n$-step vector as
\[
\Delta \vv s := (s_1,s_2-s_1,\dots,s_n-s_{n-1}).
\]
We next introduce the processes $\vv L_m', \vv R_m'$, coupled with $\vv L_m$ and $\vv R_m$ respectively as follows
\[
\Delta L_{m,j}':=\begin{cases}
0, & \mbox{ if } \Delta L_{m,j} = 1,\\
-1, & \mbox{ if } \Delta L_{m,j} = -1,\\
-\xi_{m,j}^\leftarrow, & \mbox{ if } \Delta L_{m,j} = 0,
\end{cases}
\qmand
\Delta R_{m,j}':=\begin{cases}
1, & \mbox{ if } \Delta R_{m,j} = 1,\\
0, & \mbox{ if } \Delta R_{m,j} = -1,\\
\xi_{m,j}^\rightarrow, & \mbox{ if } \Delta R_{m,j} = 0,
\end{cases}
\]
where $\{\xi_{m,j}^\leftarrow\}_{m\in\N,j=1,\dots,m}, \{\xi_{m,j}^\rightarrow\}_{m\in\N, j=1,\dots,m}$ are i.i.d.~Bernoulli random variables with parameter $1/2$, independent from
$\{T_n\}_{n\in\N}$,  $\{\vv L_m\}_{m\in\N_0}$, and $\{\vv R_m\}_{m\in\N_0}$.
Note that   this construction consists of three independent sequences
$\{T_n\}_{n\in\N}$, $\{(\vv L_m,\vv L_{m}')\}_{m\in \N_0}$, and $\{(\vv R_m,\vv R_{m}')\}_{m\in\N_0}$, and they are all needed in the proof of Theorem \ref{thm:1}. For Theorem \ref{thm:TL denisov} in this section we shall need, for each $n\in\N$ fixed, only the construction of $T_n$, $\{(\vv L_m,\vv L_{m}')\}_{m=0,\dots,n-1}$, and $\{(\vv R_m,\vv R_{m}')\}_{m=0,\dots,n}$.

We then introduce the concatenated process
 \[
{\vv S}_n:=\begin{cases}\vv L_{T_n-1}\odot \vv R_{n-T_n}, & \mbox{ if } T_n = 1,\dots,n,\\
\vv R_n, & \mbox{ if } T_n = 0,
\end{cases}\quad n\in\N,
\] with
\begin{multline*}
\vv L_{m-1}\odot \vv R_{n-m}:=\Big(0, \underbrace{L_{m-1,m-2}-L_{m-1,m-1}, \dots,L_{m-1,1}-L_{m-1,m-1},-L_{m-1,m-1}}_{m-1~{\rm terms}},\\
-L_{m-1,m-1}-1,
\underbrace{-L_{m-1,m-1}-1+R_{n-m,1},\dots,-L_{m-1,m-1}-1+R_{n-m,n-m}}_{n-m~\rm terms}\Big), m=1,\dots,n.%
\end{multline*}
(In particular, the position of the process at $m$-th step is $-L_{m-1,m-1}-1$.)
Similarly, we set
\[
{\vv S}'_n:=\begin{cases}{\vv L}_{T_n-1}'\odot' {\vv R}_{n-T_n}', & \mbox{ if } T_n = 1,\dots,n,\\
\vv R'_n, & \mbox{ if } T_n = 0,
\end{cases}
\]
where
\begin{multline}
{\vv L}_{m-1}'\odot' {\vv R}_{n-m}'
:=\Big(0, \underbrace{L'_{m-1,m-2}-L'_{m-1,m-1}, \dots,L'_{m-1,1}-L'_{m-1,m-1},-L'_{m-1,m-1}}_{m-1~{\rm terms}},\\
-L'_{m-1,m-1},
\underbrace{-L'_{m-1,m-1}+R'_{n-m,1},\dots,
-L'_{m-1,m-1}
+R'_{n-m,n-m}}_{n-m~\rm terms}\Big), m=1,\dots,n.\label{eq:wt S^1}
\end{multline}
(Note that the $m$-th step of the path $\vv L_{m-1}\odot \vv R_{n-m}$ is $-1$, while the $m$-th step of the path $\vv L'_{m-1}\odot'\vv R'_{n-m}$ is $0$.)

\begin{figure}[hbt]
    \begin{tikzpicture}[scale=.65]
 \draw[-] (.9,0) to (15.1,0);

   \draw[-] (1,-.1) to (1,0.08);
  \draw[-] (2,-.1) to (2,0.08);
  \draw[-] (3,-.1) to (3,0.08);
  \draw[-] (4,-.1) to (4,0.08);
  \draw[-] (5,-.1) to (5,0.08);
  \draw[-] (6,-.1) to (6,0.08);
  \draw[-] (7,-.1) to (7,0.08);
  \draw[-] (8,-.1) to (8,0.08);
  \draw[-] (9,-.1) to (9,0.08);
  \draw[-] (10,-.1) to (10,0.08);
  \draw[-] (11,-.1) to (11,0.08);
  \draw[-] (12,-.1) to (12,0.08);
    \draw[-] (13,-.1) to (13,0.08);
      \draw[-] (14,-.1) to (14,0.08);
      \draw[-] (15,-.1) to (15,0.08);

\draw [fill] (1,0) circle [radius=0.08];
\draw[-,thick,dashed] (1,0) to (2,1);
\draw [fill] (2,1) circle [radius=0.08];
\draw[-,thick,dashed] (2,1) to (3,2);
\draw [fill] (3,2) circle [radius=0.08];
\draw[-,thick,dashed] (3,2) to (4,2);
\draw [fill] (4,2) circle [radius=0.08];
 \draw[-,thick,dashed] (4,2) to (5,2);
\draw [fill] (5,2) circle [radius=0.08];
 \draw[-,thick,dashed] (5,2) to (6,2);
\draw [fill] (6,2) circle [radius=0.08];
 \draw[-,thick,dashed] (6,2) to (7,3);
\draw [fill] (7,3) circle [radius=0.08];
 \draw[-,thick,dashed] (7,3) to (8,3);
\draw [fill] (8,3) circle [radius=0.08];
 \draw[-,thick,dashed] (8,3) to (9,3);
\draw [fill] (9,3) circle [radius=0.08];
 \draw[-,thick,dashed] (9,3) to (10,4);
\draw [fill] (10,4) circle [radius=0.08];
 \draw[-,thick,dashed] (10,4) to (11,5);
\draw [fill] (11,5) circle [radius=0.08];
 \draw[-,thick,dashed] (11,5) to (12,5);
\draw [fill] (12,5) circle [radius=0.08];
 \draw[-,thick,dashed] (11,5) to (12,5);
  \draw[-,thick,dashed] (12,5) to (13,6);
\draw [fill] (13,6) circle [radius=0.08];
  \draw[-,thick,dashed] (13,6) to (14,7);
\draw [fill] (14,7) circle [radius=0.08];
  \draw[-,thick,dashed] (14,7) to (15,8);
\draw [fill] (15,8) circle [radius=0.08];

\draw [fill] (9,-1) circle [radius=0.08];
\draw[-,dotted,thick] (9,0) to (9,-2);
\node[below] at (9,-2) { \footnotesize $T_n'$};
\draw [fill] (9,0) circle [radius=0.08];

\draw[-,thick,dashed] (1,0) to (2,.5);
\draw [fill] (2,.5) circle [radius=0.08];
\draw[-,thick,dashed] (2,.5) to (3,1);
\draw [fill] (3,1) circle [radius=0.08];
\draw[-,thick,dashed] (3,1) to (4,.5);
\draw [fill] (4,.5) circle [radius=0.08];
 \draw[-,thick,dashed] (4,.5) to (5,0);
\draw [fill] (5,0) circle [radius=0.08];
 \draw[-,thick,dashed] (5,0) to (6,-.5);
\draw [fill] (6,-.5) circle [radius=0.08];
 \draw[-,thick,dashed] (6,-.5) to (7,0);
\draw [fill] (7,0) circle [radius=0.08];
 \draw[-,thick,dashed] (7,0) to (8,-.5);
\draw [fill] (8,-.5) circle [radius=0.08];
 \draw[-,thick,dashed] (8,-.5) to (9,-1);
\draw [fill] (9,-1) circle [radius=0.08];
\draw (9,-1) circle [radius = 0.2];
 \draw[-,thick,dashed] (9,-1) to (10,-.5);
\draw [fill] (10,-.5) circle [radius=0.08];
 \draw[-,thick,dashed] (10,-.5) to (11,0);
\draw [fill] (11,0) circle [radius=0.08];
 \draw[-,thick,dashed] (11,0) to (12,-.5);
\draw [fill] (12,-.5) circle [radius=0.08];
  \draw[-,thick,dashed] (12,-.5) to (13,0);
\draw [fill] (13,0) circle [radius=0.08];
  \draw[-,thick,dashed] (13,0) to (14,.5);
\draw [fill] (14,.5) circle [radius=0.08];
  \draw[-,thick,dashed] (14,.5) to (15,1);
\draw [fill] (15,8) circle [radius=0.08];

   \node[below] at (1,0) {\footnotesize  $0$};
      \node[below] at (2,0) { \footnotesize $ 1$};
       \node[below] at (3,0) { \footnotesize $2$};

             \node[below] at (15,0) { \footnotesize $n$};

\node[right] at  (15,8) { \tiny $\vv S_n' = \vv L_{T_n-1}'\odot'\vv R'_{n-T_n}$};
\draw[<-,blue,thick] (1,0) to (2,0);
\draw[<-,blue,thick] (2,0) to (3,1);
\draw[<-,blue,thick] (3,1) to (4,0);
\draw[<-,blue,thick] (4,0) to (5,-1);
\draw[-,thick] (5,-1) to (6,-2);
\draw[-,dotted,thick] (6,0) to (6,-2);
\node[below] at (6,-2) { \footnotesize $T_n=m$};
\draw [fill] (6,0) circle [radius=0.08];

\draw[->,blue,thick] (6,-2) to (7,-1);
\draw[->,blue,thick] (7,-1) to (8,-1);
\draw[->,blue,thick] (8,-1) to (9,-2);
\draw[->,blue,thick] (9,-2) to (10,-1);
\draw[->,blue,thick] (10,-1) to (11,-1);
\draw[->,blue,thick] (11,-1) to (12,-1);
\draw[->,blue,thick] (12,-1) to (13,0);
\draw[->,blue,thick] (13,0) to (14,1);
\draw[->,blue,thick] (14,1) to (15,2);
\draw [fill,blue] (5,-1) circle [radius=0.08];
\draw [fill] (6,-2) circle [radius=0.08];
\draw (6,-2) circle [radius=0.2];

\node[right] at  (15,2.3) { \tiny $\vv S_n = \vv L_{m-T_n}\odot \vv R_{n-T_n}$};
\node[right] at  (15,1) { \tiny $\what{\vv S}_n'$};

\draw[dotted,thick] (15,2) to (17,2);
\draw[dotted,thick] (6,-2) to (17,-2);
\draw[->,thick] (16,-2) to (16,2);
\node[right] at (16,-0.5) {\tiny $R_{n-T_n,n-T_n}$};
\draw[dotted,thick] (0,-1) to (5,-1);
\draw[dotted,thick] (0,0) to (1,0);
\draw[->,thick] (0.5, -1) to (0.5, 0);
\node[left] at (0.5,-0.5) {\tiny $L_{T_n-1,T_n-1}$};
\draw[dotted,thick] (4,-2) to (6,-2);
\draw[->,thick] (4.5, -2) to (4.5, -1);
\node[left] at (4.5,-1.5) {\tiny $1$};
\draw[dotted,thick] (15,0) to (17,0);
\draw[->,thick] (16.5,0) to (16.5,2);
\node[right] at (16.5,1) {$\substack{S_{n,n} = R_{n-T_n,n-T_n}\\\quad\quad - L_{n,T_n-1}-1}$};

\end{tikzpicture}
  \caption{An illustration of the construction.
   With $n=14, T_n = m = 5$,
the left random walk is  $\vv L_{n,T_n-1} = (0,1,2,1,1)$ and
the right random walk is $\vv R_{n,n-T_n} = (0,1,1,0,1,1,1,2,3,4)$. The paths $\vv S_n$, $\vv S_n'$, and $\what{\vv S}_n'$ are plotted.  In this example, $T_n' = 8$.
  }\label{FigH}
\end{figure}

We have the following results.

\begin{theorem}\label{thm:TL denisov}
 The law $\wt P_{\rmTL,n,a,b}$ on $\{0,1\}^n\times\{-1,0,1\}^n$
is the law of the  increment processes $(\Delta{\vv S}_n',\Delta{\vv S}_n)$.  In particular, $\Delta{\vv S}'_n$ has the law
 of  stationary measure of open TASEP with parameters $\alpha = 1/(1+a),\beta = 1/(1+b)$.
\end{theorem}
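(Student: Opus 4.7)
The plan is to verify that the joint law of the increment pair $(\Delta \vv S_n', \Delta \vv S_n)$ factors exactly as in the Bayesian decomposition \eqref{eq:Bayesian}: the marginal law of $\Delta \vv S_n$ should match $P_{\rmRW,n,a,b}$ as given in \eqref{eq:Denisov0}, and the conditional law of $\Delta \vv S_n'$ given $\Delta \vv S_n$ should equal $\prod_{j=1}^n q(\Delta S'_{n,j} \mid \Delta S_{n,j})$. Once both are confirmed, Theorems \ref{thm:BZ} and \ref{thm:BZ'} immediately identify the first-coordinate marginal $\Delta \vv S'_n$ with the stationary measure of open TASEP.

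First I compute the marginal density of $\Delta \vv S_n$. By construction, conditionally on $T_n = m \in \{1,\dots,n\}$, the increments $\Delta \vv S_n$ decompose into (i) the \emph{reversed} increments of $\vv L_{m-1}$ on positions $1,\dots,m-1$, (ii) the deterministic step $-1$ at position $m$, and (iii) the increments of $\vv R_{n-m}$ on positions $m+1,\dots,n$. The symmetry $\nu_1(-1)=\nu_1(1)$ means reversal preserves $\nu_1$-weights, and the exponential-tilt identity $\nu_a(\omega)/\nu_1(\omega)=a^\omega/w_a$ converts the $\nu_a$-weight of the non-negative conditioned pre-path into a $\nu_1$-weight times $a^{L_{m-1,m-1}}/w_a^{m-1}$, and analogously for $\nu_b$ on the post-path. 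Combining these with the explicit $\proba(T_n=m)$ from \eqref{eq:T_n}, the factors $w_a^{m-1}$, $w_b^{n-m}$, $p_{m-1}^{(a)}$, $p_{n-m}^{(b)}$ cancel precisely against the normalizations from the tilts and the non-negativity conditioning, and what remains is
\[
\frac{1}{C_{n,a,b}}\,\nu_1^{\otimes n}(\vv\omega)\,a^{L_{m-1,m-1}+1}\,b^{R_{n-m,n-m}}.
\]
The geometric identities $L_{m-1,m-1}+1=-\min_{0\le j\le n} s_j$ and $R_{n-m,n-m}=s_n-\min_{0\le j\le n} s_j$, both transparent from the concatenation formula for $\vv S_n$, reduce this to exactly \eqref{eq:Denisov0}. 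The boundary case $T_n=0$ is handled analogously, with the $a/4 = a\cdot\nu_1(-1)$ factor absent because there is no deterministic step.

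For the conditional law, the coupling defining $(\vv L'_m,\vv L_m)$ and $(\vv R'_m,\vv R_m)$ is engineered precisely so that at every position $j\in\{1,\dots,n\}$ one has: if $\Delta S_{n,j}=1$ then $\Delta S'_{n,j}=1$; if $\Delta S_{n,j}=-1$ (including the deterministic step at $T_n$, where the pair is $(0,-1)$) then $\Delta S'_{n,j}=0$; and if $\Delta S_{n,j}=0$ then $\Delta S'_{n,j}$ is an independent $\mathrm{Bernoulli}(1/2)$ supplied by a $\xi$-variable. A short case check over the three values of $\Delta L_{m-1,m-j}$ in the reversed pre-path (taking the sign flip from reversal into account), the three values of $\Delta R_{n-m,j-m}$ in the post-path, and the deterministic step at $T_n$, confirms this in all positions. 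Mutual independence of the $\xi$-variables across positions then yields the product structure.

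The principal subtlety is correctly tracking the factor $a$ contributed by the deterministic $-1$ step at $T_n$: this is what produces the crucial ``$+1$'' in the identity $L_{m-1,m-1}+1=-\min_j s_j$ and explains the different prefactors in the two cases of \eqref{eq:T_n}. Everything else in the argument is routine bookkeeping of the $\nu_1$ versus $\nu_a$ and $\nu_b$ tilts and of the conditioning on non-negativity.
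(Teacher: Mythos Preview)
Your proposal is correct and follows essentially the same approach as the paper: reduce to checking that the marginal law of $\Delta\vv S_n$ is $P_{\rmRW,n,a,b}$ (the paper states this separately as Proposition~\ref{prop:denisov}) and that the conditional law of $\Delta\vv S_n'$ given $\Delta\vv S_n$ matches the product kernel $\prod_j q(\cdot\mid\cdot)$ from Theorem~\ref{thm:BZ'}. The only cosmetic difference is direction: the paper starts from the density formula~\eqref{eq:Denisov0}, decomposes a generic path at its first minimum, and uses the tilt identity $a^\omega\nu_1(\omega)=w_a\nu_a(\omega)$ to arrive at the law of the constructed $\Delta\vv S_n$, whereas you start from the construction and run the same identities backward to recover~\eqref{eq:Denisov0}; the substance is identical.
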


We shall also consider $\what{\vv S}_n' = (0,\what S_{n,1}',\dots,\what S_{n,n}')$ with $\what S_{n,j}' = S_{n,j}' - j/2, j=0,\dots,n$, and
\equh\label{eq:T_n'}
T_n':=\min\ccbb{m=0,\dots,n:\what S_{n,m}' = \min_{j=0,\dots,n} \what S'_{n,j}}.
\eque
Now, the random vector $(\tau_{n,1},\dots,\tau_{n,n},\tau_n^*)$ has the same law as $(\Delta S_{n,1}',\dots,\Delta S_{n,n}',T_n')$.
Figure \ref{FigH} provides an illustration.

To prove Theorem \ref{thm:TL denisov}, it suffices to establish the following. %
\begin{proposition}\label{prop:denisov}
The law of $P_{\rmRW,n,a,b}$ on $\{-1,0,1\}^n$ as determined by \eqref{eq:Denisov0} is the law of the process $(\Delta S_1,\dots,\Delta  S_n)$.
\end{proposition}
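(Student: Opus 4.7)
The plan is to verify the identity of measures by pointwise computation on $\{-1,0,1\}^n$. Fix $\vv\omega = (\omega_1,\dots,\omega_n)$, set $\vv s = (0, s_1,\dots,s_n)$ with $s_j := \omega_1+\cdots+\omega_j$, and let $m := \min\{j : s_j = \min_k s_k\}$. I would first observe that under the construction the event $\{(\Delta S_1,\dots,\Delta S_n) = \vv\omega\}$ occurs if and only if $T_n = m$ together with $L_{m-1,j} = s_{m-1-j} - s_{m-1}$ for $j=0,\dots,m-1$ and $R_{n-m,j} = s_{m+j} - s_m$ for $j=0,\dots,n-m$; when $m \geq 1$, the step $\omega_m = -1$ is automatically forced, and the non-negativity of $\vv L_{m-1}$ and $\vv R_{n-m}$ follows from the fact that $m$ is the first hitting time of the minimum of $\vv s$.

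Using the mutual independence of $T_n, \vv L_{m-1}, \vv R_{n-m}$ and the explicit laws of the conditioned walks, for $m \geq 1$ this yields
\[
\proba\!\pp{(\Delta S_1,\dots,\Delta S_n)=\vv\omega}
= \proba(T_n=m)\cdot \frac{\prodd k1{m-1}\nu_a(-\omega_k)}{p_{m-1}\topp a}\cdot \frac{\prodd k{m+1}n\nu_b(\omega_k)}{p_{n-m}\topp b},
\]
where the sign flip in the first product comes from the time-reversal implicit in $L_{m-1,j} = s_{m-1-j} - s_{m-1}$. The boundary case $m = 0$ is analogous, with only the $\vv R_n$ factor surviving.

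The main algebraic step exploits the identity $\nu_c(\omega) = c^\omega\nu_1(\omega)/w_c$ for $c \in \{a,b\}$ and $\omega \in \{-1,0,1\}$, together with the symmetry $\nu_1(-\omega) = \nu_1(\omega)$. After substituting \eqref{eq:T_n} and expanding, the factors $w_a^{m-1}p_{m-1}\topp a$ and $w_b^{n-m}p_{n-m}\topp b$ cancel against the denominators; the prefactor $a/4$ combines with the missing factor $\nu_1(\omega_m) = 1/4$ for the forced step $\omega_m = -1$; and the powers of $a$ collapse via $s_{m-1} = s_m + 1$ to $a^{-s_m}$ while those of $b$ give $b^{s_n - s_m}$. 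The result is precisely $C_{n,a,b}\inv b^{s_n}/(ab)^{s_m}\,\nu_1^{\otimes n}(\vv\omega)$, matching \eqref{eq:Denisov0}. I expect the main obstacle to be the notational bookkeeping surrounding the time-reversal and the distinguished role of the index $m$; once these are tracked carefully the computation is routine algebra, and summing the result over $\vv\omega$ reproduces \eqref{eq:C_n,a,b} as a consistency check on the normalization.
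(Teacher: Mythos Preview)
Your proof is correct and follows essentially the same approach as the paper: both decompose $\vv\omega$ at the first minimum location $m$, use the tilt identity $\nu_c(\omega)=c^\omega\nu_1(\omega)/w_c$ (together with the symmetry $\nu_1(-\omega)=\nu_1(\omega)$), and match the $a/4$ prefactor in \eqref{eq:T_n} against the forced step $\nu_1(\omega_m)=1/4$. The only cosmetic difference is direction---the paper starts from \eqref{eq:Denisov0} and rewrites it until the law of $(\Delta S_1,\dots,\Delta S_n)$ is recognizable, while you start from the construction and compute its law; the algebra is identical.
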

Indeed, once Proposition \ref{prop:denisov} is established, one readily checks that the coupling between ${\vv S}'_n$ and ${\vv S_n}$ corresponds exactly to the formula \eqref{eq:Bayesian} in Theorem \ref{thm:BZ'}, which completes the proof of Theorem~\ref{thm:TL denisov}.

The rest of this section is devoted to the proof of Proposition \ref{prop:denisov}.
\begin{proof}
Throughout we fix $n\in\N$. Let
\[
\calM_\ell:=\ccbb{\vv\omega\in\{-1,0,1\}^\ell:\min_{j=0,\dots,\ell}s_j(\vv\omega) = 0}
\] denote the space of all paths of length $\ell$ that remain non-negative.

For each path from $\{-1,0,1\}^n$, we decompose it at its minimum position as follows. Set
\equh\label{eq:t_n}
t_n\equiv t_n(\vv\omega) :=
\min\ccbb{k=0,\dots,n:s_{k}(\vv\omega) = \min_{j=0,\dots,n}s_j(\vv\omega)}, \vv\omega\in\{-1,0,1\}^n.
\eque
So, the minimum of the path is first achieved at $t_n$. Moreover, when $t_n>0$ it necessarily follows that $\omega_{t_n}=-1$. We set accordingly two paths
\begin{align*}
\vv\omega^\leftarrow\equiv\vv\omega^\leftarrow(\vv\omega)&:=\pp{-\omega_{t_n-1},\dots,-\omega_1}\in\calM_{t_n-1}, \quad t_n\ge 2,\\
\vv\omega^\rightarrow\equiv\vv\omega^\rightarrow(\vv\omega)&:=\pp{\omega_{t_n+1},\dots,\omega_n}\in\calM_{n-t_n}, \quad t_n\le n-1.
\end{align*}
Note that $\omega_{t_n}$ is excluded from both sequences. Introduce accordingly
\begin{align*}
s^\leftarrow_{t_n-1} & := \omega^\leftarrow_1+\cdots+\omega^\leftarrow_{t_n-1},\\
s^\rightarrow_{n-t_n} & := \omega^\rightarrow_1+\cdots+\omega^\rightarrow_{n-t_n}.
\end{align*}
Set $s^\leftarrow_0 = s^\rightarrow_0 = 0$.
Note that when $t_n = 0$, we shall only need $s^\rightarrow_n$ below.
Then,
starting from \eqref{eq:Denisov0}, we have, when $t_n \ne 0$,
\begin{align*}
P_{\rmRW,n,a,b}(\vv\omega) &=\frac1{C_{n,a,b}}\nu_1^{\otimes n}(\vv\omega)\frac{b^{s_n}}{(ab)^{\min_{j=0,\dots,n}s_j}} =  \frac1{C_{n,a,b}}\nu_1^{\otimes n}(\vv\omega) \frac{b^{s_{n-t_n}^\rightarrow -
s_{t_n-1}^\leftarrow-1}}{(ab)^{-s_{t_n-1}^\leftarrow-1}}\\
&=  \frac1{C_{n,a,b}}\nu_1^{\otimes n}(\vv\omega) a^{s_{t_n-1}^\leftarrow+1}b^{s_{n-t_n}^\rightarrow} \\
& = \frac1{C_{n,a,b}}
\frac{a}4 \pp{\nu_1^{\otimes(t_n-1)}(\vv\omega^\leftarrow)a^{s_{t_n-1}^\leftarrow}}\pp{\nu_1^{\otimes(n-t_n)}(\vv\omega^\rightarrow)b^{s_{n-t_n}^\leftarrow}}.
\end{align*}
Note that we used the fact that $\omega_{t_n}=-1$  and $\nu_1(-1) = 1/4$
  in the last step.
Similarly, when $t_n = 0$ we have
\[
P_{\rmRW,n,a,b}(\vv\omega) = \frac1{C_{n,a,b}}\nu_1^{\otimes n}(\vv\omega)b^{s_n^\rightarrow}.
\]
Noticing that $a^\omega \nu_1(\omega)=w_a\nu_a(\omega)$, we have
\begin{align*}
\nu_1^{\otimes m}(\vv\omega_m)a^{\omega_{m,1}+\cdots+\omega_{m,m}} &= \pp{\frac a4}^{\summ j1m \inddd{\omega_{m,j} = 1}}\pp{\frac1{4a}}^{\summ j1m \inddd{\omega_{m,j} = -1}}\pp{\frac12}^{\summ j1m \inddd{\omega_{m,j} = 0}} \\
& = w_a^m\nu_a^{\otimes m}(\vv\omega_m) \quad\vv\omega_m\in\calM_m.
\end{align*}

Therefore, we have arrived at
\equh\label{eq:PRW}
P_{\rmRW,n,a,b}(\vv\omega) = \begin{cases}
\displaystyle\frac a{4C_{n,a,b}}w_a^{t_n-1}\nu_a^{\otimes(t_n-1)}(\vv\omega^\leftarrow) w_b^{n-t_n}\nu_b^{\otimes(n-t_n)}(\vv\omega^\rightarrow), & \mbox{ if } t_n>0,\\\\
\displaystyle\frac1{C_{n,a,b}}w_b^n\nu_b^{\otimes n}(\vv\omega),& \mbox{ if } t_n = 0.
\end{cases}
\eque

It remains to recognize that the above is the law of $(\Delta S_1,\dots,\Delta S_n)$ with $\vv S_n$ constructed at the beginning of this section. To see this, since $\sum_{\vv\omega_m\in\calM_m}\nu_a^{\otimes m}(\vv\omega_m) = p_m\topp a$ is the probability that a random walk with step law $\nu_a$ does not reach to $-1$ during the first $m$ steps, we first recognize
\[
\frac1{p_m\topp a}\nu^{\otimes m}_a(\cdot) \mbox{ restricted to $\calM_m$}
\]  as the conditional law of $m$-step vector of a random walk with step law $\nu_a$ given that it does not reach $-1$ during the first $m$ steps. Now, we write the expression in \eqref{eq:PRW} when $t_n>0$ as
\begin{multline*}
\frac a{4C_{n,a,b}}w_a^{t_n-1}\nu_a^{\otimes(t_n-1)}(\vv\omega^\leftarrow) w_b^{n-t_n}\nu_b^{\otimes(n-t_n)}(\vv\omega^\rightarrow) \\
=
\frac a{4C_{n,a,b}}w_a^{t_n-1}w_b^{n-t_n}p_{t_n-1}\topp a p_{n-t_n}\topp b\frac{\nu_a^{\otimes(t_n-1)}(\vv\omega^\leftarrow)}{p_{t_n-1}\topp a}
\frac{\nu_b^{\otimes(n-t_n)}(\vv\omega^\rightarrow)}{p_{n-t_n}\topp b}.
\end{multline*}
On the right-hand side above, we recognize the product of the laws of %
increments of
$\vv L_{t_n-1}$ and $\vv R_{n-t_n}$, and the multiplicative weight $(a/4)w_a^{t_n-1}w_b^{n-t_n}p_{t_n-1}\topp ap_{n-t_n}\topp b$ matches exactly the weight in the formula \eqref{eq:T_n} of $T_n$. The case $t_n = 0$ can be interpreted in a similar way. This completes the proof.
\end{proof}
\section{An auxiliary result for biased random walks}\label{sec:aux}
In  Denisov's representation introduced in Section \ref{sec:Denisov}, the left and right processes are biased random walks conditioned to not to reach $-1$ for a finite number steps. As an auxiliary result we investigate two functional central limit theorems here.
\begin{lemma}\label{lem:aux}
Let $\{X_n\}_{n\in\N}$ be i.i.d.~random variables with law $\nu_a$ with $a>1$. Set
\[
X_n':=\begin{cases}
1, & \mbox{ if } X_n = 1,\\
0, & \mbox{ if } X_n = -1,\\
\xi_n, & \mbox{ if } X_n = 0,
\end{cases}
n\in\N,
\]
where $\{\xi_n\}_{n\in\N}$ are i.i.d.~Bernoulli random variables with parameter $1/2$ independent from $\{X_n\}_{n\in\N}$. Set $W_n:=X_1+\cdots+X_n, W_n':=X_1'+\cdots+X_n', n\in\N$ and $W_0 = W_0':=0$.
\begin{enumerate}[(i)]
\item Then,
\equh\label{eq:WW'}
\pp{\ccbb{\frac1{\sqrt n}\pp{W_{\floor{nt}}- \floor{nt}\frac{a-1}{a+1}}}_{t\in[0,1]},
\ccbb{\frac1{\sqrt n}\pp{W_{\floor{nt}}'-\floor{nt}\frac a{a+1}}}_{t\in[0,1]}}
\weakto\vec\BB_a
\eque
in $D[0,1]^2$  as $n\to\infty$,
where $\vec\BB_a = \ccbb{\BB_a(t),\BB'_a(t)}_{t\in[0,1]}$ is %
a bivariate Brownian motion that satisfies
\[
\cov\pp{\vec\BB_a(1),\vec\BB_a(1) } %
= \esp \pp{\vec \BB_a(1)\vec\BB_a(1)^T}
 = \frac1{(a+1)^2}\pp{\begin{array}{cc}
2a & a\\
a & a
\end{array}}.
\]
\item Assume in addition $a>1$.
Consider the stopping time $\tau_{-1}:=\min\{\ell\in\N: W_\ell = -1\}$. If for each $n$ the left-hand side above is with respect to the conditional law given the event $\{\tau_{-1}>n\}$, then the same convergence \eqref{eq:WW'} holds.
\item Moreover, the tightness in both cases above is established with respect to the uniform topology (more precisely, for the second case see \eqref{eq:W_n tight4} below).
\end{enumerate}
\end{lemma}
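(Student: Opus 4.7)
The plan is to address the three parts separately: (i) is a routine multidimensional Donsker, (ii) requires transferring the FCLT from the unconditional to the conditional law, and (iii) follows from the uniform boundedness of the increments. For (i), the key step is to compute the covariance matrix of $(X_1,X_1')$. From the definition of $\nu_a$ one reads off $\proba(X_1=1)=a^2/(a+1)^2$, $\proba(X_1=-1)=1/(a+1)^2$, $\proba(X_1=0)=2a/(a+1)^2$, and a short computation gives the first moments $\esp X_1=(a-1)/(a+1)$, $\esp X_1'=a/(a+1)$, as well as $\var X_1=2a/(a+1)^2$, $\var X_1'=a/(a+1)^2$, and $\esp(X_1 X_1')=a^2/(a+1)^2$, so that $\cov(X_1,X_1')=a/(a+1)^2$. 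The multivariate Donsker invariance principle applied to the i.i.d.\ centered bivariate sequence then gives the convergence to $\vec\BB_a$ with the stated covariance. The uniform-topology tightness for the unconditional case is immediate since the jumps of $\hat W_n$ and $\hat W_n'$ are bounded by $1/\sqrt n$.

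For (ii), since $a>1$ the walk has positive drift $(a-1)/(a+1)$, so $W_n\to\infty$ almost surely and $p:=\proba(\tau_{-1}=\infty)>0$. Setting $A_n:=\{\tau_{-1}>n\}$ and $A_\infty:=\{\tau_{-1}=\infty\}$, we have $A_n\downarrow A_\infty$ and $\proba(A_n)\to p$. Since
\[
\esp\bb{F(\hat W_n,\hat W_n') \mmid A_n} = \frac{\esp\bb{F(\hat W_n,\hat W_n')\ind_{A_n}}}{\proba(A_n)}
\]
for any bounded continuous $F$, it suffices to prove the joint weak convergence $(\hat W_n,\hat W_n',\ind_{A_n})\weakto(\vec\BB_a,Z)$ with $Z\sim\mathrm{Bernoulli}(p)$ independent of $\vec\BB_a$. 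To obtain this asymptotic independence, I would introduce $\tau^*:=\min\{k\ge 0:W_k=\inf_{j\ge 0}W_j\}$, almost surely finite by positive drift. On $\{\tau^*\le K\}$, $\ind_{A_\infty}=\ind_{\{W_{\tau^*}\ge 0\}}$ is $\sigma(W_0,\dots,W_K)$-measurable. By the Markov property, $(W_{K+j}-W_K)_{j\ge 1}$ is a fresh independent copy of the walk; applying (i) to this copy, together with the fact that $\hat W_n(K/n)\to 0$ in probability for fixed $K$, shows that $\hat W_n$ still converges to $\vec\BB_a$ independently of $\sigma(W_0,\dots,W_K)$. Letting $K\to\infty$ via $\proba(\tau^*>K)\to 0$ upgrades this to independence from $\ind_{A_\infty}$, and dividing by $\proba(A_n)\to p$ delivers the conditional FCLT. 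Conditional uniform tightness for (iii) follows from $\proba(\cdot\mid A_n)\le\proba(\cdot)/\proba(A_n)$ together with $\proba(A_n)\ge p/2$ for $n$ large, which transfers the unconditional modulus-of-continuity estimates.

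The hard step is the asymptotic independence in (ii): the cutoff $\{\tau^*\le K\}$ plus the Markov restart at time $K$ is what makes the finite initial segment on which $\ind_{A_\infty}$ depends asymptotically irrelevant to the $1/\sqrt n$-scale fluctuation limit, while the strict positivity of $p$ is what allows conditional statements to be extracted from weak convergence under the unconditional law.
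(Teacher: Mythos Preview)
Your treatment of (i) and (iii) matches the paper's: compute moments, invoke Donsker, and for conditional tightness bound $\proba(\cdot\mid A_n)$ by $\proba(\cdot)/\proba(A_n)$ with $\proba(A_n)\ge p>0$.

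For (ii), your approach and the paper's rest on the same mechanism---the Markov restart and the tail bound $\proba(K<\tau_{-1}<\infty)\to 0$---but package it differently. The paper follows Iglehart and writes
\[
\esp\bb{f(\vec{\vv W}_n)\mmid\tau_{-1}>n}
=\frac1{\proba(\tau_{-1}>n)}\pp{\esp f(\vec{\vv W}_n)-\summ k1n\esp\bb{f(\vec{\vv W}_n)\mmid\tau_{-1}=k}\proba(\tau_{-1}=k)},
\]
then shows $\esp[f(\vec{\vv W}_n)\mid\tau_{-1}=k]\to\esp f(\vec\BB_a)$ for each fixed $k$ by restarting the walk at time $k$; splitting the sum at $K$ and sending $K\to\infty$ finishes. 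Your route via joint weak convergence of $(\vec{\vv W}_n,\ind_{A_n})$ and asymptotic independence is arguably more streamlined, since you restart at a deterministic time $K$ rather than conditioning on $\{\tau_{-1}=k\}$.

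There is, however, a slip in your argument: $\tau^*$ is the time of the \emph{global} minimum and is not a stopping time, so the event $\{\tau^*\le K\}$ is \emph{not} $\sigma(W_0,\dots,W_K)$-measurable (it depends on whether the walk ever dips below $\min_{j\le K}W_j$ after time $K$). Hence you cannot simply say ``on $\{\tau^*\le K\}$, $\ind_{A_\infty}$ is $\mathcal F_K$-measurable'' and then invoke the Markov property. The fix is immediate: drop $\tau^*$ and approximate $\ind_{A_n}$ directly by $Y_K:=\ind_{\{\tau_{-1}>K\}}$, which \emph{is} $\mathcal F_K$-measurable since $\tau_{-1}$ is a stopping time, and note $|\ind_{A_n}-Y_K|\le\ind_{\{K<\tau_{-1}\le n\}}$. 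The Markov restart at time $K$ then gives $\esp[g(\vec{\vv W}_n)Y_K]\to\esp[g(\vec\BB_a)]\,\proba(\tau_{-1}>K)$ for each $K$, and sending $K\to\infty$ completes your argument exactly as you outlined.
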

\begin{proof}
One readily checks that
\[
\esp X_1 = \frac{a-1}{a+1}, \quad\esp X_1'= \frac a{a+1},
\]
and, with $\vec X = (X_1,X_1')$,
\[ \cov
\pp{\vec X,\vec X} = \frac1{(a+1)^2}\pp{\begin{array}{cc}
2a & a\\
a & a
\end{array}}.
\]
Part (i) follows from standard Donsker's theorem (for random walk in $\Z^2$).

We next prove part (ii).
For each $n\in\N, t\in[0,1]$, let $(W_n(t),W'_n(t))$ denote the random vector indexed by $t$ on the left-hand side of \eqref{eq:WW'}. Write
\equh\label{eq:vec W}
\vec {\vv W}_n = (\vv W_n,\vv W'_n) = \pp{\ccbb{W_n(t)}_{t\in[0,1]},\ccbb{W'_n(t)}_{t\in[0,1]}} \in D[0,1]^2.
\eque
Let $f:D[0,1]^2\to \R$ be a continuous and bounded function. The goal is to show
\equh\label{eq:goal}
\limn \esp \pp{f\pp{\vec{\vv W}_n}\mmid\tau_{-1}>n} = \esp  f\pp{\vec\BB_a}.
\eque
We follow the idea of \citet[Section 5]{iglehart74functional}. Recall that with $a>1$, we have $\proba(\tau_{-1} < \infty) = 1/a^2$. Write
\begin{multline}
\esp \pp{f\pp{\vec{\vv W}_n}\mmid\tau_{-1}>n}\\
 = \frac1{\proba(\tau_{-1}>n)}\pp{\esp f\pp{\vec{\vv W}_n} - \summ k1n \esp \pp{f\pp{\vec {\vv W}_n}\mmid \tau_{-1} = k}\proba(\tau_{-1} = k)}.\label{eq:0}
\end{multline}
Part (i) tells exactly that
\[
\limn \esp f\pp{\vec{\vv W}_n} = \esp f\pp{\vec\BB_a}.
\]
We also have
\equh\label{eq:D exercise}
\limn \esp f\pp{\vec{\vv W}_n\mmid \tau_{-1}=k} = \esp f\pp{\vec\BB_a}, \mfa k\in\N,
\eque
of which the proof is provided below.
 Then, it follows that
\equh
\lim_{K\to\infty}\limn\sum_{k=1}^K \esp \pp{f\pp{\vec {\vv W}_n}\mmid \tau_{-1} = k}\proba(\tau_{-1} = k) = \esp f\pp{\vec\BB_a}\proba(\tau_{-1}<\infty),\label{eq:1}
\eque
and
\equh
\lim_{K\to\infty}\limsupn\sum_{k=K+1}^n \esp \pp{f\pp{\vec {\vv W}_n}\mmid \tau_{-1} = k}\proba(\tau_{-1} = k) \le \lim_{K\to\infty} \nn f_\infty \proba(K<\tau_{-1}<\infty) = 0,  \label{eq:2}
\eque
where we used the continuity of measure in the last step.     Combining \eqref{eq:0}, \eqref{eq:1}, and \eqref{eq:2}, we have
\begin{align*}
\limn\esp \pp{f\pp{\vec{\vv W}_n}\mmid\tau_{-1}>n}
 &= \frac1{\proba(\tau_{-1}=\infty)}\pp{\esp f\pp{\vec\BB_a} - \esp f\pp{\vec\BB_a}\proba(\tau_{-1}<\infty)} \\
 &= \esp f\pp{\vec\BB_a},
\end{align*}
which completes the proof of \eqref{eq:goal}.

To complete the proof of part (ii) it remains to prove \eqref{eq:D exercise}. We first show the tightness. For a stochastic process $\eta = \{\eta_t\}_{t\in[0,1]}$, introduce its modulus of continuity
\[
\omega(\eta,\delta):= \sup_{s,t\in[0,1],|t-s|\le\delta}\abs{\eta_t-\eta_s}.
\]
We establish
\begin{align}
\lim_{\delta\downarrow0}\limsupn\proba\pp{\omega(\vv W_n,\delta)>\eta\mmid \tau_{-1} = k}&=0,\label{eq:W_n tight}\\
\lim_{\delta\downarrow0}\limsupn\proba\pp{\omega(\vv W'_n,\delta)>\eta\mid\tau_{-1}=k}&=0.\nonumber
\end{align}
We only show the first claim and the proof is the same for the second. Notice that we can write, for $k$ fixed and $n>k$,
\begin{align*}
\omega(\vv W_n,\delta) &\le {\sup_{s,t\in[0,k/n],|t-s|\le\delta}\abs{W_n(t)-W_n(s)} + \sup_{s,t\in[k/n,1],|t-s|\le\delta}\abs{W_n(t)-W_n(s)}}\\
& =: {\omega_{n,k}\topp 1(\vv W_n,\delta)+\omega_{n,k}\topp2(\vv W_n,\delta)}.
\end{align*}
It is clear that by construction
\[%
\omega_{n,k}\topp1\pp{\vv W_n,\delta}\le \frac{2k}{\sqrt n}\quad \mbox{ almost surely,}
\]%
and hence $\omega_{n,k}\topp1\pp{\vv W_n,\delta}\to 0$ almost surely. Next, notice that under the conditional law given $\tau_{-1} = k$, $W_n(k/n) = -1/\sqrt n$, and therefore
\equh\label{eq:W_n,k}
W_{n,k}(s):=W_n\pp{s+\frac kn}+\frac1{\sqrt n}, s\ge 0,
\eque is the normalized partial-sum process of a random walk with step law $\nu_a$ starting from 0 (independent from the value of $k\in\N$; we also extend the definition of $W_n(t)$ to $t>1$). Since, with $\vv W_{n,k} = \{W_{n,k}(s)\}_{s\in[0,1]}$,
\[
\omega_{n,k}\topp 2(\vv W_n,\delta)\le \omega(\vv W_{n,k},\delta),
\]
and it is well-known that (see \citep{billingsley99convergence} for details)
\[%
\lim_{\delta\downarrow0}\limsupn \proba\pp{\omega(\vv W_{n,k},\delta)>\eta\mmid\tau_{-1} =k} = 0,
\]%
 it follows that
\equh\label{eq:W_n tight2}
\lim_{\delta\downarrow 0}\limsup_{n\to\infty}\proba\pp{\omega_{n,k}\topp2(\vv W_n,\delta)>\eta\mmid \tau_{-1} = k} = 0.
\eque
We have proved the tightness condition \eqref{eq:W_n tight}.

We next show the marginal convergence.
The convergence of finite-dimensional distributions follows by essentially the same argument and is omitted. Fix $t\in(0,1]$ and $k\in\N$. We explain how to show the following: for all continuous and bounded functions $f:\R\to\R$,
\equh\label{eq:W_n CLT}
\limn\esp \pp{f\pp{W_n(t)}\mid \tau_{-1} = k} = \esp f\pp {\sigma_{a,t} Z} \qmwith \sigma_{a,t} = \frac{\sqrt {2at}}{1+a},
\eque
and $Z$ on the right-hand side is a standard normal random variable. Indeed, for $n$ large enough so that $\floor{nt}>k$, by \eqref{eq:W_n,k}  we have
\equh\label{eq:W_n tight3}
\abs{W_n(t)-W_{n,k}(t)}\le  \abs{W_{n,k}\pp{t-\frac kn}-W_{n,k}(t)}  + \frac1{\sqrt n}.
\eque
Under the conditional law given $\tau_{-1} = k$, the central limit theorem says $W_{n,k}(t)\weakto \sigma_{a,t} Z$. Then
\begin{align*}
\limsupn&~\proba\pp{\abs{W_n(t) - W_{n,k}(t)}>\eta\mmid\tau_{-1} = k}\\
& \le \limsupn\proba\pp{\abs{W_{n,k}\pp{t-\frac kn} - W_{n,k}(t)} >\eta\mmid\tau_{-1} = k}\\
& \le \limsupn \proba\pp{\omega\topp 2_{n,k}(\vv W_n,\delta)>\eta\mmid\tau_{-1}=k},
\end{align*}
for any $\delta>0$, and hence
thanks to  \eqref{eq:W_n tight2} we take $\delta\downarrow 0$ and arrive at
$W_n(t) - W_{n,k}(t)\to 0$ in probability as $n\to\infty$ (with respect to the conditional law given $\tau_{-1} = k$). We have proved \eqref{eq:W_n CLT} and hence part (ii).

For part (iii), the tightness for the process with respect to the original law (without any conditioning) is well-known. It suffices to show
\equh\label{eq:W_n tight4}
\lim_{\delta\downarrow 0}\limsupn\proba\pp{\omega(\vv W_n,\delta)>\eta\mmid\tau_{-1}>n} = 0.
\eque
Again following the idea behind \eqref{eq:0}, we have
\[
\lim_{\delta\downarrow0}\limsupn\proba\pp{\omega(\vv W_n,\delta)>\eta\mmid \tau_{-1}>n} \le \frac1{\proba(\tau_{-1} = \infty)}\lim_{\delta\downarrow0}\limsupn\proba(\omega(\vv W_n,\delta)>\eta) = 0,
\]
where in the last step we used the tightness of the process with respect to its original law.
\end{proof}

\section{Proof of the main result}\label{sec:proof}
Recall $T_n'$ given in \eqref{eq:T_n'}. Consider
\equh\label{eq:centering T_n'}
\wb S'_{n,j}:=\frac1{\sqrt n}\pp{S'_{n,j} - ((T'_n-1)\wedge j)\frac1{a+1} - (j-T'_n)_+\frac a{a+1}}, j=0,\dots,n.
\eque
Now, we restate Theorem \ref{thm:1} using the representation introduced in Theorem \ref{thm:TL denisov}.
\begin{theorem}\label{thm:1'}
With $a=b>1$, we have
\equh\label{eq:FCLT wb S}
\ccbb{\wb S'_{n,\floor{nt}}}_{t\in[0,1]}\weakto \ccbb{\sigma_a\BB_{t\wedge U} + \sigma_a\BB'_{(t-U)_+}}_{t\in[0,1]},
\eque
in $D[0,1]$ as $n\to\infty$, %
 where $U$ is a uniform random variable over $(0,1)$, $\{\BB_t\}_{t\in[0,1]}$ and $\{\BB_t'\}_{t\in[0,1]}$ are two independent Brownian motions also independent from $U$, and  \[\sigma_a :=\frac{\sqrt{a}}{a+1}.\]
\end{theorem}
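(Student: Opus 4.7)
The plan is to exploit the Denisov representation of Theorem \ref{thm:TL denisov}, which writes $\vv S'_n = \vv L'_{T_n-1}\odot'\vv R'_{n-T_n}$ where $T_n$ is independent from the mutually independent families $\{(\vv L_m,\vv L'_m)\}_{m\in\N_0}$ and $\{(\vv R_m,\vv R'_m)\}_{m\in\N_0}$. The argument has three pieces: (I) $T_n/n\weakto U$; (II) conditionally on $T_n=m_n$ with $m_n/n\to u\in(0,1)$, the rescaled left and right pieces jointly converge to independent Brownian motions on $[0,u]$ and $[u,1]$ via Lemma \ref{lem:aux}(ii); and (III) the argmin $T_n'$ of the drift-compensated process $\what{\vv S}'_n$ is within $o_P(\sqrt n)$ of $T_n$, so the centering in \eqref{eq:centering T_n'} may be evaluated at $T_n$ instead of $T_n'$ up to a negligible error.

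For (I), when $a=b$ the geometric factor $w_a^{m-1}w_b^{n-m}=w_a^{n-1}$ in \eqref{eq:T_n} is independent of $m$; together with the convergence $p_m\topp a\to 1-a^{-2}>0$ of the survival probability of the positive-drift walk, an asymptotic analysis of \eqref{eq:C_n,a,b} yields $C_{n,a,b}\sim (a/4)\,n\, w_a^{n-1}p_\infty\topp a p_\infty\topp b$, and hence $\proba(T_n=m)\sim 1/n$ uniformly for $m\in[\eps n,(1-\eps)n]$, which gives $T_n/n\weakto U$. For (II), direct computation gives
\[
\esp\Delta L'=-\tfrac{1}{a+1},\quad \esp\Delta R'=\tfrac{a}{a+1},\quad \var\Delta L'=\var\Delta R'=\sigma_a^2,
\]
and the identity $\Delta L'_j=\Delta L_j-X'_j$ (with $X'_j$ the auxiliary variable of Lemma \ref{lem:aux}) together with continuous mapping applied to Lemma \ref{lem:aux}(ii) yields
\[
\ccbb{\tfrac{1}{\sqrt{m_n}}\pp{L'_{m_n-1,\floor{m_n s}}+\tfrac{\floor{m_n s}}{a+1}}}_{s\in[0,1]}\weakto \sigma_a \BB'',
\]
while Lemma \ref{lem:aux}(ii) applied directly to $\vv R'_{n-m_n}$ produces the analogous convergence to an independent $\sigma_a \BB'''$. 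Substituting into \eqref{eq:wt S^1}, reversing time in the left piece (which preserves the Brownian law), and absorbing $\sqrt{m_n/n}\to\sqrt u$ and $\sqrt{(n-m_n)/n}\to\sqrt{1-u}$, the process centered by $T_n$ converges in $D[0,1]$ to $\sigma_a(\BB_{t\wedge u}+\BB'_{(t-u)_+})$ where $\BB$ and $\BB'$ are independent standard Brownian motions.

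For (III), the drift of $\what{\vv S}'_n$ is $1/(a+1)-1/2=-c$ to the left of $T_n$ and $a/(a+1)-1/2=c$ to the right, with $c=(a-1)/(2(a+1))>0$; a Cram\'er-type tail bound on the argmin of a walk with two-sided positive drift away from a cusp gives $|T_n-T_n'|=O_P(1)$, so that swapping $T_n'$ by $T_n$ in the centering contributes a uniform error of $O(|T_n-T_n'|)/\sqrt n=o_P(1)$. The main technical obstacle will be combining (I) and (II) into a single weak limit in $D[0,1]$: because $T_n$ is independent of the $(\vv L,\vv R)$ families and the map $(u,\BB,\BB')\mapsto\sigma_a(\BB_{\cdot\wedge u}+\BB'_{(\cdot-u)_+})$ is almost surely continuous in $u\in(0,1)$, this can be realized via the Skorohod representation theorem (coupling so that $T_n/n\to U$ almost surely alongside a coupled realization of the left and right conditional FCLTs), with a truncation restricting to $u\in[\eps,1-\eps]$ used to control the boundary contribution (which vanishes by the estimate in (I)); tightness in $D[0,1]$ is inherited from the uniform-topology tightness in Lemma \ref{lem:aux}(iii).
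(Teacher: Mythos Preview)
Your three-step plan (I)--(III) is correct and mirrors the paper's proof exactly: Proposition~\ref{prop:uniform} is your (I), Proposition~\ref{prop:fluctuation T_n} together with the lemma establishing~\eqref{eq:FCLT wt S} is your (II) plus the combining step, and Lemma~\ref{lemma:tightness} is your (III). Two small points of comparison: the paper formalizes your ``Skorohod coupling of $T_n/n\to U$ alongside the conditional FCLTs'' via the language of \emph{almost sure weak convergence with respect to $\calG=\sigma(\{T_n\})$}, which cleanly handles the random index $T_n-1$ in $\wb{\vv L}'_{T_n-1}$ without needing to couple the left/right FCLTs themselves (only $T_n$ is coupled; the $(\vv L_m,\vv L'_m)$ and $(\vv R_m,\vv R'_m)$ families stay on their original space); and for (III) the paper does not use a Cram\'er bound but instead dominates $\proba(T_n'-T_n>K\mid\calG)$ by $\proba(T''>K)/\proba(\tau_{-1}=\infty)$ where $T''$ is the argmin of the unconditioned drifted walk $\what W'_j$, which is a.s.\ finite since $a>1$.
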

Throughout, all convergence in $D[0,1]$ is with respect to the uniform topology.
We start with an overview of the proof.
\begin{enumerate}[(i)]
\item We first shall establish in Proposition \ref{prop:uniform} that ${T_n}/n\weakto U$ as $n\to\infty$.
\item Next, we show in Proposition \ref{prop:fluctuation T_n} that
\equh\label{eq:FCLT}
\pp{\pp{\wb {\vv L}_{T_n-1},\wb{\vv L}_{T_n-1}'},\pp{\wb {\vv R}_{n-T_n},\wb{\vv R}_{n-T_n}'},\frac{T_n}n}
\weakto\pp{\vec\BB_a\topp1,\vec\BB_a\topp2,U},
\eque
in $D[0,1]^2\times D[0,1]^2\times\R$, where $\wb {\vv L}_{T_n-1},\wb{\vv L}_{T_n-1}',\wb {\vv R}_{n-T_n},\wb{\vv R}_{n-T_n}'$ are certain normalized processes to be introduced below, and $\vec\BB_a\topp1,\vec\BB_a\topp2$ are i.i.d.~copies of the bivariate Brownian motion
\[
\vec\BB_a = \ccbb{\BB_a(t),\BB'_a(t)}_{t\in[0,1]}
\]
that satisfies
\[
\cov\pp{\vec\BB_a(1),\vec\BB_a(1)} %
=\esp\pp{\vec\BB_a(1)\vec\BB_a(1)^T}
= \frac1{(a+1)^2}\pp{\begin{array}{cc}
2a & a\\
a & a
\end{array}}.
\]
The processes $\vec\BB_a\topp1,\vec\BB_a\topp2$ are also independent from $U$.
\item The functional central limit theorem \eqref{eq:FCLT} immediately yields a functional central limit theorem for $S_{n,j}'$ normalized slightly differently from \eqref{eq:centering T_n'} (based on $T_n$ instead of $T_n'$).
From here, to prove the desired convergence, it suffices to show that $\{T_n-T_n'\}_{n\in\N}$ is tight, which is established in Lemma \ref{lemma:tightness}.
\end{enumerate}

We establish the three steps above in each of the following subsections respectively. Moreover, our approach is based on a coupled version of the construction introduced earlier, and in particular we will prove a stronger version of \eqref{eq:FCLT} and also a stronger version of \eqref{eq:FCLT wb S} (see Remark \ref{rem:asw version}) that elaborates the conditional independence structure in Proposition \ref{prop:fluctuation T_n}.
\subsection{Convergence of the minimum location}
We derive the limit law of $T_n$ in case $a=b>1$.

\begin{proposition}\label{prop:uniform}
Let $T_n$ %
have the law \eqref{eq:t_n} with $a=b>1$. Then,
\[
\frac{T_n}n\weakto U,
\]
as $n\to\infty$.
\end{proposition}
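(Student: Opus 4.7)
My plan for the proof is as follows.

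First, I specialize the formula \eqref{eq:T_n} to the case $a=b$. The crucial observation is that $w_a^{m-1}w_b^{n-m}=w_a^{n-1}$ is then independent of $m$, so for $m=1,\dots,n$,
\[
\proba(T_n=m) = \frac{a\, w_a^{n-1}}{4\, C_{n,a,a}}\, p^{(a)}_{m-1}\, p^{(a)}_{n-m},
\]
while $\proba(T_n=0) = w_a^n p^{(a)}_n / C_{n,a,a}$ is a separate boundary term. The goal is therefore to show that the probability mass is asymptotically uniform over $\{1,\dots,n\}$.

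Next, I would invoke the basic fact (already used in the proof of Lemma~\ref{lem:aux}(ii) for the companion walk $W_n$) that for $a>1$ the random walk with step law $\nu_a$ has strictly positive drift $(a-1)/(a+1)$, and a first-step analysis gives hitting probability $\proba(\tau_{-1}<\infty)=1/a^2$. Hence $p^{(a)}_m \to p^{(a)}_\infty := 1-1/a^2 > 0$ as $m\to\infty$. By a Ces\`aro-type estimate this yields
\[
\frac{1}{n}\sum_{m=1}^n p^{(a)}_{m-1}\, p^{(a)}_{n-m} \;\longrightarrow\; \bigl(p^{(a)}_\infty\bigr)^2,
\]
and therefore $C_{n,a,a} \sim (a/4)\, w_a^{n-1}\, n\, (p^{(a)}_\infty)^2$, which in particular makes the $m=0$ boundary term of order $1/n$ and hence asymptotically negligible. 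Combining, $n\,\proba(T_n=m) \to 1$ uniformly over $m$ with $\min(m, n-m)\to\infty$.

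Finally, for any $0 < s < t < 1$, summing gives
\[
\proba\!\left(s < \frac{T_n}{n} \le t\right) = \sum_{m=\lfloor sn\rfloor+1}^{\lfloor tn\rfloor} \proba(T_n=m) \;\longrightarrow\; t-s,
\]
which identifies the limit as $U\sim\mathrm{Unif}(0,1)$ via convergence of distribution functions on the bounded interval $[0,1]$.

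The main ``obstacle'' is really just careful bookkeeping of the normalization; the substantive content is the twofold coincidence at $a=b$ that (a) the exponential factor $w_a^{m-1}w_b^{n-m}$ becomes constant in $m$, and (b) the surviving probabilities $p^{(a)}_{m-1}p^{(a)}_{n-m}$ stabilize to a strictly positive limit in the bulk because $a>1$. It is worth emphasizing why the argument fails off the coexistence line: when $a \ne b$, the factor $w_a^{m-1}w_b^{n-m}$ would exponentially concentrate the mass near one of the endpoints $m=0$ or $m=n$, consistent with the purely deterministic first-order limits known in the HD/LD phases.
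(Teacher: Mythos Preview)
Your proof is correct and follows essentially the same approach as the paper: specialize \eqref{eq:T_n} to $a=b$, use $p^{(a)}_m\to 1-1/a^2$ together with a Ces\`aro-type estimate on $\sum_m p^{(a)}_{m-1}p^{(a)}_{n-m}$ to identify the asymptotics of $C_{n,a,a}$, and conclude $\proba(T_n\le nx)\to x$. Your additional commentary on why the argument breaks down off the coexistence line is a nice touch not present in the paper.
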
\begin{proof}
For $x\in(0,1)$, we have
\equh\label{eq:T_n le}
\proba(T_n\le nx)  = \frac{aw_a^{n-1}}{4C_{n,a,a}}\sum_{m=1}^{\floor{nx}}p\topp a_{m-1}p\topp a_{n-m} + \frac1{C_{n,a,a}}w_a^np_n\topp a.
\eque
We know that
\[
\limn p\topp a_n = 1-\frac1{a^2},
\]
which is the probability that the biased random walk never visits $-1$. Notice that for any sequence $\{b_n\}_{n\in\N}$ such that $\limn b_n = b\in \R$, we have $n\inv\summ k1{\floor{nx}} b_kb_{n-k} = xb^2, x\in[0,1]$. It follows that the last term on the right-hand side of \eqref{eq:T_n le} tends to zero,
\[
\summ m1{\floor{nx}}p_{m-1}\topp ap_{n-m}\topp a\sim   nx \pp{1-\frac1{a^2}}^2,
\]
and hence (recalling the formula of $C_{n,a,a}$ in \eqref{eq:C_n,a,b})
\[
C_{n,a,a}\sim  \frac a4 n \cdot  w_a^{n-1}\pp{1-\frac1{a^2}}^2.
\]
We can now conclude that $\limn \proba(T_n\le nx) = x$ for all $x\in(0,1)$.
\end{proof}

\subsection{Convergence of the left and right processes}
We are interested in the following transformed processes (with different means):
\begin{align*}
\wb L_n(t) &:= \frac1{\sqrt n}\pp{L_{n,\floor{nt}} - \floor{nt}\frac{a-1}{a+1}},\\
\wb L_n'(t) &:= \frac1{\sqrt n}\pp{L_{n,\floor{nt}}' + \floor{nt}\frac1{a+1}},\\
\wb R_n(t) &:= \frac1{\sqrt n}\pp{R_{n,\floor{nt}} - \floor{nt}\frac{a-1}{a+1}},\\
\wb R_n'(t) &:= \frac1{\sqrt n}\pp{R'_{n,\floor{nt}} - \floor{nt}\frac a{a+1}}, \quad n\ge 1.
\end{align*}
When $n=0$ we set all the above processes to $0$ (a constant function indexed by $t\in[0,1]$). We write $\wb{\vv L}_n \equiv \{\wb L_n(t)\}_{t\in[0,1]}$ and similar notations for $\wb{\vv L}'_n, \wb{\vv R}_n, \wb{\vv R}'_n$, all viewed as random elements in $D[0,1]$.

We make use of the notion of almost surely weak convergence.
This will %
help us to exploit the conditional independence structure of the left and right processes given $T_n$.
We say a sequence of random elements $\{X_n\}_{n\in\N}$ taking values from a Polish space $(\mathcal X,d)$ converges almost surely weakly to another random element $X$ (also taking values from $(\mathcal X,d)$) with respect to $\calG$ if, for any countable convergence-determining class $\{f_i\}_{i\in\N}$ for convergence of probability measures on $(\mathcal X, \calB(\mathcal X))$ ($\calB(\mathcal X)$ is the Borel $\sigma$-algebra induced by the metric on $\mathcal X$),
\equh\label{eq:def aswto}
\limn \esp (f_i(X_n)\mid \calG) = \esp (f_i(X)\mid\calG) \mfa i\in\N,  \mbox{ almost surely.}
\eque
(The existence of countable convergence-determining class is known.)  In this case, we write
\[%
X_n \aswto X \mbox{ w.r.t.~$\calG$,}
\]%
as $n\to\infty$.
 Note that the above immediately implies that $X_n\weakto X$ is as $n\to\infty$. In addition, in practice, it suffices to establish \eqref{eq:def aswto} for any continuous and bounded function $f$. A nice reference on almost sure weak convergence that we found (on a slightly more general setup) is \citep[Section 4]{grubel16functional} and we refer to it for more details.

In order to make use of this notion of convergence,
by Skorokhod's representation theorem we assume in addition that in our construction of the processes in Section \ref{sec:Denisov}, %
the random sequence $\{T_n\}_{n\in\N}$ and $U$ are constructed on a common probability space such that
\[%
\limn \frac{T_n}n = U \mbox{ almost surely.}
\]%
In particular, now all processes are defined on a common probability space.
 Then throughout we set $\calG = \sigma(\{T_n\}_{n\in\N})$ and we shall need almost surely a weak convergence with respect to $\calG$ below.

\begin{proposition}\label{prop:fluctuation T_n}
Assume $a=b>1$. We have the following joint convergence:
\begin{equation}
  \label{conv0}
  \pp{\pp{\wb  {\vv L}_{T_n-1},\wb{\vv L}_{T_n-1}'},\pp{\wb {\vv R}_{n-T_n},\wb{\vv R}_{n-T_n}'}}
\aswto\pp{{\vec \BB}_a\topp1,\vec\BB_a\topp2} \mbox{ w.r.t.~$\calG$,}
\end{equation}
in $D[0,1]^2\times D[0,1]^2$ as $n\to\infty$, where $\vec\BB\topp1_a,\vec\BB\topp2_a$ are i.i.d.~copies of two-dimensional Brownian motion $\vec\BB_a$.
As a consequence, \eqref{eq:FCLT} holds.
\end{proposition}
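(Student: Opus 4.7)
The plan is to condition on $\calG=\sigma(\{T_n\}_{n\in\N})$ and, using the independence structure, reduce the claim to two deterministic-index functional CLTs that both fall under Lemma~\ref{lem:aux}(ii). I would first invoke Skorokhod's representation to arrange $T_n/n\to U$ almost surely; since $U\in(0,1)$ almost surely, this gives $T_n-1,n-T_n\to\infty$ almost surely. By construction, $\{T_n\}_{n\in\N}$, $\{(\vv L_m,\vv L_m')\}_{m\in\N_0}$ and $\{(\vv R_m,\vv R_m')\}_{m\in\N_0}$ are mutually independent, so conditional on $\calG$ the two coupled families preserve their joint distribution (and their mutual independence), while $T_n-1$ and $n-T_n$ become deterministic indices.

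Next, I would check that for any deterministic $m\to\infty$, both $(\wb{\vv L}_m,\wb{\vv L}_m')\weakto\vec\BB_a$ and $(\wb{\vv R}_m,\wb{\vv R}_m')\weakto\vec\BB_a$ in $D[0,1]^2$. For $(\wb{\vv R}_m,\wb{\vv R}_m')$ the step law and the coupling on the zero-steps coincide exactly with the canonical $(X,X')$ of Lemma~\ref{lem:aux}, and the conditioning that $\vv R_m$ remains non-negative up to step $m$ is precisely $\{\tau_{-1}>m\}$ there, so Lemma~\ref{lem:aux}(ii) applies verbatim. For $(\wb{\vv L}_m,\wb{\vv L}_m')$ the coupling differs from the canonical one by a sign flip and a unit drift; setting $\tilde\xi_j:=1-\xi_{m,j}^\leftarrow$ one reads off $\Delta L_{m,j}'=X_j'-1$, hence $L_{m,j}'=W_j'-j$ and
\[
\wb L_m'(t)=\frac1{\sqrt m}\pp{W'_{\floor{mt}}-\floor{mt}\frac a{a+1}},
\]
which is precisely the second coordinate on the left of \eqref{eq:WW'}. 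Thus Lemma~\ref{lem:aux}(ii) still yields the convergence for $(\wb{\vv L}_m,\wb{\vv L}_m')$.

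To upgrade to the random-index claim \eqref{conv0}, fix bounded continuous $f_1,f_2:D[0,1]^2\to\R$ and define $\varphi_m(f_1):=\esp f_1(\wb{\vv L}_m,\wb{\vv L}_m')$ and $\psi_m(f_2):=\esp f_2(\wb{\vv R}_m,\wb{\vv R}_m')$; the previous step gives $\varphi_m(f_1)\to\esp f_1(\vec\BB_a)$ and $\psi_m(f_2)\to\esp f_2(\vec\BB_a)$. The conditional independence noted above yields
\[
\esp\pp{f_1(\wb{\vv L}_{T_n-1},\wb{\vv L}_{T_n-1}')\,f_2(\wb{\vv R}_{n-T_n},\wb{\vv R}_{n-T_n}')\mmid\calG}=\varphi_{T_n-1}(f_1)\,\psi_{n-T_n}(f_2)
\]
almost surely, and since $T_n-1,n-T_n\to\infty$ almost surely, the right-hand side converges a.s.\ to $\esp f_1(\vec\BB_a\topp1)\esp f_2(\vec\BB_a\topp2)$. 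Applying this to a countable convergence-determining class of product functions yields \eqref{conv0}; the consequence \eqref{eq:FCLT} is then immediate, because almost sure weak convergence with respect to $\calG$ implies weak convergence and $T_n/n$ is $\calG$-measurable with a.s.\ limit $U$.

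The main obstacle will be conceptual rather than computational: one must package the almost sure weak convergence framework so that both the random-time indexing of the $\vv L$ and $\vv R$ families and their conditional independence given $\calG$ are absorbed into a single pathwise application of Lemma~\ref{lem:aux}(ii). Once that is in place, the small sign-and-drift bookkeeping for $(\wb{\vv L},\wb{\vv L}')$ is routine.
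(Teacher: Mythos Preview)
Your proposal is correct and follows essentially the same approach as the paper: condition on $\calG$, use the mutual independence of $\{T_n\}$, $\{(\vv L_m,\vv L_m')\}$, and $\{(\vv R_m,\vv R_m')\}$ to reduce to deterministic-index convergence, and then invoke Lemma~\ref{lem:aux}(ii) along the almost sure sequences $T_n-1\to\infty$ and $n-T_n\to\infty$. The paper's proof simply asserts that $(\wb{\vv L}_n,\wb{\vv L}_n')$ has the same law as $\vec{\vv W}_n$ conditioned on $\{\tau_{-1}>n\}$, whereas you spell out the sign-and-drift bookkeeping ($\Delta L'_{m,j}=X'_j-1$ via $\tilde\xi=1-\xi^\leftarrow$) that makes this identification explicit; this extra detail is helpful but not a different idea.
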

\begin{proof}
  It follows from the construction that the sequences $\{T_n\}_{n\in\N}$, $\{\spp{\wb  {\vv L}_{n},\wb{\vv L}_{n}'}\}_{n\in\N_0}$ and $\sccbb{\spp{\wb  {\vv R}_{n},\wb{\vv R}_{n}'}}_{n\in\N_0}$ are independent.    For every $n=1,2,\dots$, processes
$\spp{\wb  {\vv L}_{n},\wb{\vv L}_{n}'}$ and $\spp{\wb  {\vv R}_{n},\wb{\vv R}_{n}'}$   have the same  law as the process  $\vec {\vv W}_n$
(see \eqref{eq:vec W})
 conditioned on $\{\tau_{-1}>n\}$  in the notation of Lemma \ref{lem:aux}.
Thus, by Lemma \ref{lem:aux},
$  \spp{\wb  {\vv L}_{n},\wb{\vv L}_{n}'}\weakto \vec\BB_a $ and $\spp{\wb  {\vv R}_{n},\wb{\vv R}_{n}'}\weakto \vec\BB_a$
in $D[0,1]$ as $n\to\infty$.

Independence of the three sequences implies that for each $n\in\N$,
 processes $\spp{\wb  {\vv L}_{T_n-1},\wb{\vv L}_{T_n-1}'}$ and $\spp{\wb {\vv R}_{n-T_n},\wb{\vv R}_{n-T_n}'}$ are conditionally independent  with respect to~$\calG$.
Then, by coupling $T_n(\omega)/n\to U(\omega)\in(0,1)$, so $T_n(\omega)\to \infty$ and $n-T_n(\omega)\to\infty$ almost surely.  This implies joint convergence \eqref{conv0} to the
 pair of i.i.d.~copies of the two-dimensional Brownian motion $\vec\BB_a$.
\end{proof}

\subsection{Proof of Theorem \ref{thm:1'}}

Following Proposition \ref{prop:fluctuation T_n}, we can now establish a functional central limit theorem for $\vv S_n'$.
Recall that we have seen in \eqref{eq:wt S^1} that
\[
{S}'_{n,j} = \pp{L'_{T_n-1,T_n-1-j}-L'_{T_n-1,T_n-1}}\inddd{j<T_n} + \pp{-L'_{T_n-1,T_n-1}+R'_{n-T_n,j-T_n}}\inddd{j\ge T_n}.
\]
We first consider the following normalization, where the centering depends on $T_n$ (which is slightly different from $\wb S_{n,j}$ of our interest):
\begin{align*}
\wt S_{n,j}'&:=\frac1{\sqrt n}\pp{S_{n,j}' - ((T_n-1)\wedge j)\frac1{a+1} - (j-T_n)_+\frac a{a+1}}%
\\
& = \sqrt{\frac{T_n-1}n}\pp{\wb L'_{T_n-1,T_n-1-j} - \wb L'_{T_n-1,T_n-1}}\inddd{j<T_n} \nonumber\\
& \quad + \pp{-\sqrt{\frac{T_n-1}n}\wb L'_{T_n-1,T_n-1} + \sqrt{\frac{n-T_n}n}\wb R'_{n-T_n,j-T_n}}\inddd{j\ge T_n}.\nonumber
\end{align*}
\begin{lemma}With notations above, we have
\equh\label{eq:FCLT wt S}
\ccbb{\wt S'_{n,\floor{nt}}}_{t\in[0,1]}\aswto \ccbb{\sigma_a\BB_{t\wedge U}+\sigma_a\BB_{(t-U)_+}}_{t\in[0,1]} \mbox{ w.r.t.~$\calG$}
\eque
in $D[0,1]$ as $n\to\infty$, where $\BB$ and $\BB'$ are two independent Brownian motions also independent from $U$.
\end{lemma}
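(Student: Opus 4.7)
The plan is to realize $\wt S'_{n,\floor{nt}}$ as the value at $t$ of a continuous functional applied to the triple $\spp{\wb{\vv L}'_{T_n-1},\wb{\vv R}'_{n-T_n},T_n/n}$, up to a vanishing discretization error, and then to push the a.s.w.~convergence of that triple through the functional via the continuous mapping theorem for a.s.w.~convergence. To begin, I would combine Propositions \ref{prop:uniform} and \ref{prop:fluctuation T_n} with the Skorokhod coupling under which $T_n/n\to U\in(0,1)$ almost surely, yielding
\[
\spp{\wb{\vv L}'_{T_n-1},\wb{\vv R}'_{n-T_n},T_n/n}\aswto\spp{\BB'_a\topp1,\BB'_a\topp2,U}\quad\text{w.r.t.~}\calG,
\]
in $D[0,1]\times D[0,1]\times[0,1]$; since the limits in the first two factors are almost surely continuous, the convergence there is in fact uniform on $[0,1]$.

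Next I would introduce the map $\Psi:C[0,1]\times C[0,1]\times(0,1)\to C[0,1]$ defined by
\[
[\Psi(L,R,u)](t):=\sqrt u\spp{L\spp{1-t/u}-L(1)}\indd{t\le u}+\spp{-\sqrt uL(1)+\sqrt{1-u}R\spp{(t-u)/(1-u)}}\indd{t>u}.
\]
Because $L(0)=R(0)=0$ for continuous paths starting at $0$, the two branches match at $t=u$ so $\Psi(L,R,u)\in C[0,1]$, and joint continuity of $\Psi$ on continuous inputs with $u\in(0,1)$ follows from uniform continuity of $L,R$ together with a routine estimate on the time-change. Plugging in the limits and setting
\[
\BB_s:=\sigma_a\inv\sqrt U\spp{\BB'_a\topp1(1-s/U)-\BB'_a\topp1(1)},\; s\in[0,U],\qquad \BB'_s:=\sigma_a\inv\sqrt{1-U}\,\BB'_a\topp2\spp{s/(1-U)},\; s\in[0,1-U],
\]
I would use $\var\BB'_a\topp i(s)=\sigma_a^2 s$ and independence of $\BB'_a\topp1,\BB'_a\topp2$ to verify directly that, conditionally on $\calG$, $\BB$ (a rescaled time-reversal of $\BB'_a\topp 1$) and $\BB'$ are independent standard Brownian motions on their respective intervals; a short algebraic computation then gives $[\Psi(\BB'_a\topp1,\BB'_a\topp2,U)](t)=\sigma_a\spp{\BB_{t\wedge U}+\BB'_{(t-U)_+}}$, matching the proposed limit.

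The remaining task is to bound
\[
\sup_{t\in[0,1]}\abs{\wt S'_{n,\floor{nt}}-[\Psi(\wb{\vv L}'_{T_n-1},\wb{\vv R}'_{n-T_n},T_n/n)](t)}
\]
and show it tends to zero. A termwise comparison of the explicit formula displayed just before the lemma with $\Psi$ evaluated at the rescaled processes reveals that the only discrepancies are the replacement of $(T_n-1)/n$ by $T_n/n$ (a multiplicative $1+O(1/n)$) and of the time arguments $\floor{nt}/(T_n-1)$ and $(\floor{nt}-T_n)/(n-T_n)$ by their continuous counterparts (argument shifts of $O(1/n)$). Since $\wb{\vv L}'_{T_n-1}$ and $\wb{\vv R}'_{n-T_n}$ converge uniformly to continuous limits they are eventually asymptotically uniformly equicontinuous, so each error is uniformly negligible in $t$. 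Continuous mapping for a.s.w.~convergence then yields \eqref{eq:FCLT wt S}. The main obstacle I expect is making the error control honestly uniform across the transition region $t\approx U$ where the two branches of $\Psi$ meet; continuity of the limit at $t=U$ (guaranteed by $L(0)=R(0)=0$) is precisely what makes this gluing work, while $\Psi$ is genuinely discontinuous on $\{u\in\{0,1\}\}$, an issue that is harmless because $U\in(0,1)$ almost surely.
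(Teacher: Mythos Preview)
Your proposal is correct and follows essentially the same route as the paper: both arguments rest on the a.s.w.~convergence of the rescaled left/right processes from Proposition~\ref{prop:fluctuation T_n}, the Skorokhod coupling $T_n/n\to U$, and the uniform-topology tightness of Lemma~\ref{lem:aux}(iii). The only difference is presentational: you package everything through a single continuous functional $\Psi$ and invoke continuous mapping, whereas the paper carries out the same computation term by term for fixed $t$ (verifying convergence of conditional expectations via an auxiliary function $F_{n,m}(y)$), then passes to finite-dimensional distributions and tightness separately.
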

\begin{proof}
We have
\equh
\wt S_{n,\floor{nt}}'
 = \sqrt{\frac{T_n-1}n}\wb L'_{T_n-1,T_n-1-\floor{nt}}\inddd{\floor{nt}<T_n} - \wb L'_{T_n-1,T_n-1} +  \sqrt{\frac{n-T_n}n}\wb R'_{n-T_n,\floor{nt}-T_n}\inddd{\floor{nt}\ge T_n}.\label{eq:3 terms}
\eque
Write
\begin{align*}
T_n-1-\floor{nt} &= (T_n-1)\pp{1-\frac{\floor{nt}}{T_n-1}},\\%\label{eq:T_n transform}\\
\floor{nt}-T_n & = (n-T_n)\frac{\floor{nt}/n-T_n/n}{1-T_n/n}.\nonumber
\end{align*}
Formally  \eqref{eq:3 terms} and the fact that $T_n/n\to U$ almost surely suggest that
\equh\label{eq:aswto}
\wt S_{n,\floor{nt}}'
 \aswto\sqrt{U} \sigma_a\BB_{1-t/U}\inddd{t<U} -\sigma_a\BB_1 + \sqrt{1-U}\sigma_a\BB'_{(t-U)/(1-U)}\inddd{t>U}
 \eque
 with respect to $\calG$ as $n\to\infty$.
 Note that the right-hand side above equals in distribution to $\sigma_a\BB_{t\wedge U} + \sigma_a\BB'_{(t-U)_+}$ as desired.

We first prove \eqref{eq:aswto} for $t\in[0,1]$ fixed.
 For the sake of simplicity we illustrate how to prove \eqref{eq:aswto} by proving the convergence of the first term of $\wt S'_{n,\floor{nt}}$ in \eqref{eq:3 terms} only: the third term can be treated similarly, and the convergence of the second term follows from the central limit theorem. Namely, we show, for every continuous and bounded function $f:\R\to\R$,
\begin{align}
\esp& \pp{f\pp{\sqrt{\frac{T_n-1}n}\wb L'_{T_n-1,T_n-1-\floor{nt}}\inddd{\floor{nt}< T_n}}\mmid\calG} \nonumber\\
&=\esp \pp{f\pp{\sqrt{\frac{T_n-1}n}\wb L'_{T_n-1, (T_n-1)\pp{1-\frac{\floor{nt}}{T_n-1}}}\inddd{\floor{nt}\le T_n-1}}\mmid\calG} \nonumber\\%\label{eq:?0}\\
& \to \esp \pp{f\pp{\sqrt U \sigma_a\BB_{1-t/U}\inddd{t<U}}\mmid \calG}.\label{eq:?}
\end{align}

We justify \eqref{eq:?}. Throughout we fix $f$.
Introduce
\[
F_{n,m}(y):=\esp f\pp{\sqrt{\frac mn}\wb L'_{m,\floor{my}}\inddd{y\in[0,1]}}.
\]
Then, for any sequences $\{m_n\}_{n\in\N}, \{y_n\}_{n\in\N}$ such that $m_n/n\to u\in(0,1), y_n\le 1, y_n\to y\in[0,1]$ as $n\to\infty$, we have
\equh\label{eq:F}
\limn F_{n,m_n}(y_n) = F_u(y):=\esp f\pp{\sqrt u \sigma_a\BB_y\inddd{y\in[0,1]}}.
\eque
Indeed, when $y_n \equiv y$, the indicator function can be removed without changing the value of the expression, and the convergence is an immediate consequence of the weak convergence of $\wb L_{m,\floor{my}}\weakto \sigma_a\BB_y$. When $y_n\to y$ with nonconstant $\{y_n\}_{n\in\N}$, we first discuss $y\in(0,1]$. In this case, again the indicator function can be removed without changing the value of the expression for large $n$. We also need the tightness of convergence in $D[0,1]$ with respect to the uniform topology, more precisely the following criteria on the tightness (which is a re-write of \eqref{eq:W_n tight4} proved in Lemma \ref{lem:aux})
\equh\label{eq:L' tight}
\lim_{\delta\downarrow0}\limsup_{m\to\infty} \proba\pp{\sup_{s,t\in[0,1],|s-t|\le \delta}\abs{\wb L'_{m,\floor{ms}}-\wb L'_{m,\floor{mt}}}>\eta} = 0.
\eque
Then, one can prove \eqref{eq:F} with $y_n\to y\in(0,1]$ by the same argument after \eqref{eq:W_n tight3}, and we omit the details.
For $y_n\to y = 0$, it is possible that $y_n<0$, and in this case $F_{n,m_n}(y_n) = 0 = F_u(0)$; therefore it suffices to show that restricted to $y_n\ge 0, y_n\to y = 0$ the convergence \eqref{eq:F} holds, and this case follows again from the same argument above for $y_n\to y\in(0,1]$.

Then, with
\[
m_n(\omega):=T_n(\omega)-1, \quad y_n(\omega):=1-\frac{\floor{nt}}{T_n(\omega)-1}, \qmand y(\omega):=1-\frac t{U(\omega)},
\]
by \citep[Theorem 8.5]{kallenberg21foundations}, the desired convergence \eqref{eq:?} is the same as
\equh\label{eq:f}
F_{n,m_n(\omega)}(y_n(\omega))\to
\esp f\pp{\sqrt {U(\omega)}\sigma_a\BB_{1-t/U(\omega)}\inddd{t<U(\omega)}} \mbox{ almost every $\omega$}
\eque
as $n\to\infty$, which is a special case of \eqref{eq:F}.  We have thus proved \eqref{eq:f} and hence \eqref{eq:?}.

The argument above can be generalized to show that \eqref{eq:aswto} holds as the almost surely weak convergence of the finite-dimensional distributions of $\{\wt S'_{n,\floor{nt}}\}_{t\in[0,1]}$ to the corresponding limit for almost every $\omega\in\Omega$. Moreover, it is clear that we also have the tightness with respect to the uniform topology for almost every $\omega\in\Omega$ (essentially this is because $\{\wb{\vv L}'_m\}_{m\in\N}$ is tight in the sense of \eqref{eq:L' tight}, so is $\{\wb{\vv R}'_m\}_{m\in\N}$ satisfying a similar condition, and hence the sum of the two processes is also tight).
We have thus proved \eqref{eq:FCLT wt S}.
\end{proof}

To prove the desired convergence \eqref{eq:FCLT wb S}, we notice
\[
\nn{\wb{\vv S}_n-\wt{\vv S}_n}_\infty :=\max_{j=0,\dots,n}\abs{\wb S_{n,j}-\wt S_{n,j}} \le \frac 1{\sqrt n}|T_n-T_n'|\frac{a-1}{a+1},
\]
and therefore that
\equh\label{eq:tightness0}
\proba\pp{\nn{\wb{\vv S}_n-\wt{\vv S}_n}_\infty>\eta\mmid \calG} \le \proba\pp{\frac1{\sqrt n}|T_n-T_n'|\frac{a-1}{a+1}>\eta\mmid\calG}\to 0
\eque
as $n\to\infty$ for all $\eta>0$. Note that in the step \eqref{eq:tightness0} above we used the fact that $\{T_n-T_n'\}_{n\in\N}$ is conditionally tight given $\calG$, to be established in Lemma \ref{lemma:tightness} below.

\begin{remark}\label{rem:asw version}
In summary, we actually have proved a stronger version than the desired \eqref{eq:FCLT wb S}: thanks to \eqref{eq:tightness0} the almost surely weak convergence of \eqref{eq:FCLT wb S} (i.e.~\eqref{eq:FCLT wt S} with $\wt S_{n,{\floor {nt}}}'$ replaced by $\wb S'_{n,\floor{nt}}$) follows.
\end{remark}
\begin{lemma}\label{lemma:tightness}
The sequence of the conditional laws of $\{T_n'-T_n\}_{n\in\N}$ with respect to $\calG$ is tight. That is,
\[
\lim_{K\to\infty}\limsupn\proba(|T_n'-T_n|>K\mid\calG) = 0, \mbox{ almost surely.}
\]
\end{lemma}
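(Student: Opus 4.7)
The plan is to condition on $\calG$ (equivalently on $T_n$), unfold the definition of $\vv S_n'$ around $T_n$, and exploit the fact that the left and right pieces of $\what{\vv S}_n'$ near $T_n$ both have strictly positive drift for $a>1$, so that the minimum of $\what{\vv S}_n'$ is attained within an $O(1)$-window of $T_n$.

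A direct unfolding of the construction in Section~\ref{sec:Denisov} gives, on $\{T_n=m\}$ with $1\le m\le n$,
\[
\what S'_{n,m-1-k}-\what S'_{n,m-1}=L'_{m-1,k}+\frac k2, \qquad 0\le k\le m-1,
\]
\[
\what S'_{n,m+k}-\what S'_{n,m}=R'_{n-m,k}-\frac k2, \qquad 0\le k\le n-m,
\]
with $\what S'_{n,m}=\what S'_{n,m-1}-1/2$, since the $m$-th increment of $\vv S_n'$ is deterministically zero. A direct computation from the definitions of $\Delta L'$ and $\Delta R'$ yields $\esp\Delta L'_{m,j}=-1/(a+1)$ and $\esp\Delta R'_{m,j}=a/(a+1)$, so in the unconditional law both walks $L'_k+k/2$ and $R'_k-k/2$ have bounded increments and common positive drift $\mu=(a-1)/(2(a+1))>0$. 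Conditionally on $\{T_n=m\}$, $\vv L'_{m-1}$ and $\vv R'_{n-m}$ are independent and are distributed as the couplings of the respective $\nu_a$-walks conditioned on $\{\tau_{-1}>m-1\}$ and $\{\tau_{-1}>n-m\}$.

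The positive drift yields the required one-sided tail bound: standard large-deviation/renewal estimates for a positive-drift walk with bounded increments show that for every $\eps>0$ there exists $K_0=K_0(\eps)$ with
\[
\proba\pp{\inf_{k\ge K_0}\pp{L'_k+\tfrac k2}\le -\tfrac12}\le\eps \qmand \proba\pp{\inf_{k\ge K_0}\pp{R'_k-\tfrac k2}\le 0}\le\eps,
\]
where $L',R'$ denote the unconditioned infinite walks. Since $a>1$, $\proba(\tau_{-1}>j)\ge\proba(\tau_{-1}=\infty)=1-a^{-2}>0$ for every $j$, so each of the above bounds persists (up to the universal factor $(1-a^{-2})\inv$) under the conditionings $\{\tau_{-1}>m-1\}$ and $\{\tau_{-1}>n-m\}$, uniformly in $m$ and $n$; a monotone containment of events lets us restrict the infimum to the finite window $[K_0,m-1]$ or $[K_0,n-m]$.

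To conclude, $\{T_n'\le T_n-1-K\}$ forces $L'_{T_n-1,k}+k/2\le -1/2$ for some $k\ge K$, while $\{T_n'\ge T_n+K\}$ forces $R'_{n-T_n,k}-k/2\le 0$ for some $k\ge K$. By the bounds above together with the conditional independence of $\vv L'_{T_n-1}$ and $\vv R'_{n-T_n}$ given $\calG$, each conditional probability given $\calG$ is bounded by some $f(K)$ independent of $T_n$ and $n$ with $f(K)\to 0$ as $K\to\infty$; this gives $\limsupn\proba(|T_n'-T_n|>K\mmid\calG)\le f(K)\to 0$ almost surely, which is the claim. The boundary values $T_n\in\{0,n\}$ are handled by the obvious one-sided versions of the same argument and are eventually absent because $T_n/n\to U\in(0,1)$ a.s. The only delicate step is the uniformity of the tail bound in the walk length, secured by the monotone containment and the uniform lower bound $\proba(\tau_{-1}>j)\ge 1-a^{-2}$, which is available precisely because $a>1$.
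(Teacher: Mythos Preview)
Your proof is correct and follows essentially the same approach as the paper's. Both arguments reduce to the fact that the centered coupled walks $R'_k-k/2$ (and its left analogue) have strictly positive drift $(a-1)/(2(a+1))$ when $a>1$, so their global minimum is attained in an $O(1)$ window, and both use the uniform lower bound $\proba(\tau_{-1}>j)\ge 1-a^{-2}$ to transfer tail estimates from the unconditioned walk to the conditioned one at the cost of a fixed multiplicative factor. The only cosmetic difference is that the paper packages the right-hand estimate via the argmin variable $T''$ of the infinite walk and invokes symmetry for the left side, whereas you unfold both sides explicitly and phrase the tail event as $\{\inf_{k\ge K}(\cdot)\le c\}$; these are equivalent formulations of the same mechanism.
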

\begin{proof}
We prove
\[
\lim_{K\to\infty}\limsupn\proba(T_n'-T_n>K\mid\calG) = 0, \mbox{ almost surely.}
\]
The other part (with $T_n'-T_n>K$ replaced by $T_n-T_n'>K$) follows essentially by the same proof by symmetry between the left and right processes in  Denisov's representation.

Recall that the centered random walk $(\what S_{n,m}')_{m=1,\dots,n}$ appears in the definition of $T_n'$ in \eqref{eq:T_n'}. Let
\[
\wt T_n':=\min \ccbb{j\in\{T_n,\dots n\}:\what S'_{n,j} = \min_{k=T_n,\dots,n}\what S'_{n,k}}
\]
denote the first location of the minimum of the transformed right process $\what {\vv S}'_n$ over the interval $\{T_n,\dots,n\}$. In particular, if $T_n'>T_n$, then $T_n' = \wt T_n'$. It then follows that
\equh\label{eq:T_n'-T_n}
\proba(T_n'-T_n>K\mid \calG)  \le \proba\pp{\wt T_n'-T_n>K\mmid\calG}.
\eque
Now, the event on the right-hand side concerns only the transformed right process
\equh\label{eq:transformed}
\pp{\what S_{n,T_n+j}'-\what S'_{n,T_n}}_{j=0,\dots,n-T_n}
\eque
 in Denisov's representation. To examine this process further, we recall the random variables $\{X_n\}_{n\in\N}, \{X_n'\}_{n\in\N}$ and the corresponding random walks $\{W_n\}_{n\in\N}$ and $\{W_n'\}_{n\in\N}$ from Section \ref{sec:aux}.  Recall also $\tau_{-1}:=\min\{j\in\N:W_j = -1\}$.
 Set also
\[
\what W_j' :=W_j' - \frac j2, j\in\N_0.
\]
The key observation is that the conditional law of \eqref{eq:transformed} given the event $T_n = m$ is the same as the conditional law of $(\what W_0',\dots,\what W_{n-m}')$ given $\tau_{-1}>n-m$.
Set also
\begin{align*}
T_m'' & :=\min\ccbb{j\in\{0,\dots,m\}:\what W_j' = \min_{k\in\{0,\dots,m\}}\what W_k'}, m\in\N,\\
T'' &:=\min\ccbb{j\in\N_0:\what W_j' = \min_{k\in\N_0}\what W_k'}.
\end{align*}
We have, for $m\in\N$ such that $m+K<n$ (otherwise the probability below is zero),
\begin{align*}
\proba\pp{\wt T_n' - T_n>K\mmid T_n = m} & = \proba\pp{T''_{n-m}>K\mmid \tau_{-1}>n-m}\\
& \le \proba\pp{T''>K\mmid \tau_{-1}>n-m} \le \frac1{\proba(\tau_{-1}>n-m)}\proba\pp{T''>K}\\
& \le \frac1{\proba(\tau_{-1} = \infty)}\proba\pp{T''>K}.
\end{align*}
We used the key observation in the first equality, the formula of conditional probability in the second inequality, and the fact that $\proba(\tau_{-1}>n-m)\le \proba(\tau_{-1} = \infty)$ in the last inequality. Plugging the above into \eqref{eq:T_n'-T_n}, we have
\[
\proba(T_n'-T_n>K\mid \calG)  \le\frac1{\proba(\tau_{-1} = \infty)}\proba(T''>K).
\]
Clearly, $\proba(\tau_{-1} = \infty)\in(0,1)$ and $T''$ is an almost surely finite random variable (since $a>1$ and hence the random walk $\what W_n'$ drifts to infinity).
The desired result now follows.
\end{proof}

\subsection*{Acknowledgements}
The authors thank two anonymous referees for helpful and constructive comments. The authors
 thank Ivan Corwin for encouraging and stimulating discussions.

\subsection*{Funding} W.B.~was partially supported by Simons Foundation~(703475). Y.W.~was partially supported by Simons Foundation (MP-TSM-00002359).
\subsection*{Data availability}
There is no data associated with this article.

\subsection*{Declarations}

\subsection*{Conflict of interest}
The authors have no relevant financial or non-financial interests to disclose.

\bibliographystyle{apalike}
\bibliography{references,references18}
\end{document}